\newcommand\Prob{{\mathbb P}}
\newcommand\Normal{{\mathcal N}}
\newcommand\X{{\mathbb X}}
\newcommand\Sp{{\mathcal S}}
\newcommand\bfb{{\boldsymbol b}}
\newcommand\bfx{{\boldsymbol x}}
\newcommand\bfy{{\boldsymbol y}}
\newcommand\bfz{{\boldsymbol z}}
\newcommand\bflambda{{\boldsymbol \lambda}}
\newcommand\bfphi{{\boldsymbol \phi}}
\newcommand\cA{{\mathcal C}}
\newcommand\cWA{{W_C}}
\newcommand\cF{{\mathcal F}}
\newcommand\cL{{\mathcal L}}
\newcommand\N{{\mathbb N}}
\newcommand\R{{\mathbb R}}
\newcommand\dto{\overset{d}{\to }}
\newcommand\Pto{\overset{\Prob}{\to}}
\newcommand\bra[1]{\langle #1 \rangle}
\newcommand\bralr[1]{\left\langle #1 \right\rangle}
\DeclareMathOperator{\Beta}{Beta}
\DeclareMathOperator{\tr}{tr}
\DeclareMathOperator{\Var}{Var}
\DeclareMathOperator{\Ex}{\mathbb E}
\DeclareMathOperator{\Cov}{Cov}
\DeclareMathOperator{\diag}{diag}
\newtheorem{theorem}{Theorem}[section]
\newtheorem{lemma}[theorem]{Lemma}
\newtheorem{proposition}[theorem]{Proposition}
\theoremstyle{definition}
\theoremstyle{remark}
\newtheorem{remark}[theorem]{Remark}
\title{Beta Jacobi ensembles and associated Jacobi polynomials, II}
\author{
Fumihiko Nakano\footnote{Mathematical Institute, Tohoku University, Sendai,  Japan.
\newline Email: fumihiko.nakano.e4@tohoku.ac.jp}
\and
Hoang Dung Trinh\footnote{Faculty of Mathematics Mechanics Informatics, University of Science, Vietnam National University, Hanoi, Vietnam.
\newline Email: thdung.hus@gmail.com} 
\and
Khanh Duy Trinh\footnote{Global Center for Science and Engineering, Waseda University, Japan.
\newline
Email: trinh@aoni.waseda.jp 
} 
}
\begin{document}

\maketitle
\begin{abstract}
In a high temperature regime, it was shown in Trinh--Trinh (\emph{J.\ Stat.\ Phys.}\ \textbf{185}(1), Paper No.\ 4, 15 (2021)) that the empirical distribution of beta Jacobi ensembles converges to a limiting probability measure which is related to Model III of associated Jacobi polynomials. In this paper, we establish Gaussian fluctuations around the limit whose statement involves orthogonal polynomials. For the proofs, we refine a moment method at the process level which has been used to deal with Gaussian beta ensembles and beta Laguerre ensembles.

\medskip
\noindent{\bf Keywords:}  Beta Jacobi ensembles ; high temperature regime ; associated Jacobi polynomials ; duals of Jacobi orthogonal polynomials
		
\medskip
	
\noindent{\bf AMS Subject Classification: } Primary 60B20 ; Secondary 60H05
\end{abstract}

\section{Introduction}

Beta Jacobi ensembles are a family of probability distributions on $[0,1]^N$ with joint probability density functions of the form
\begin{equation}\label{bJE}
\frac 1 {Z} \times  \prod_{i < j} |\lambda_j - \lambda_i|^\beta \prod_{l = 1}^N \lambda_l^a (1 - \lambda_l)^b, \quad (0 \le \lambda_1\le \cdots \le \lambda_N \le 1),
\end{equation} 
where $N$ is the system size, $Z$ is the normalizing constant and $\beta > 0$ and $a,b > -1$ are parameters. They can be realized as the joint distribution of the eigenvalues of random tridiagonal matrices \cite{Killip-Nenciu-2004}. In a high temperature regime where $N \to \infty$ with $\beta N \to 2c \in (0, \infty)$, and $a, b$ are fixed, (with $c$ being a constant), the empirical distribution 
\[
	L_{N, \beta} = \frac1N \sum_{i=1}^N \delta_{\lambda_i}
\]
converges weakly to a probability measure $\nu_c$ (depending on $a$ and $b$ as well) which is the probability measure of a new model (Model III) of associated Jacobi polynomials, almost surely \cite{Trinh-Trinh-Jacobi}. Here $\delta_\lambda$ denotes the Dirac measure at $\lambda \in \R$. Since all probability measures $L_{N, \beta}$ are supported on $[0,1]$, the above convergence of random probability measures is equivalent to the following: for any polynomial test function $p$, as $N \to \infty$ with $\beta N \to 2c$, 
\[
	\bra{L_{N, \beta}, p} = \frac1N \sum_{i=1}^N p(\lambda_i) \to \bra{\nu_c, p}, \quad \text{almost surely.}
\] 
Here $\bra{\mu, f}$ denotes the integral $\int f d\mu$, for an integrable function $f$ with respect to a probability measure $\mu$. The above almost sure convergence is referred to as the law of large numbers (LLN) for polynomial test functions.

The aim of this paper is to study fluctuations around the limit. We establish central limit theorems (CLT) for $\{\bra{L_{N, \beta}, p}\}$ with polynomial test functions $p$. Moreover, we obtain multivariate CLTs involving orthogonal polynomials with respect to the measure $x(1-x) \nu_c(x)dx$. Let us introduce the result in detail. Denote by $\nu_c^*$ the probability measure with density $\nu_c^*(x)$ proportional to $x(1-x)\nu_c(x)$, that is,
\[
	\nu_c^*(x) = \frac1{Z_c^*} x(1-x) \nu_c(x),	\quad x \in (0,1), \quad (Z_c^* = \bra{\nu_c, x(1-x)}).
\]
Let $\{\tilde p_n\}_{n \ge 0}$ be orthonormal polynomials with respect to $\nu_c^*$ which are defined by the following three term recurrence relation (see Sect.~\ref{sect:Duals})
\begin{equation}\label{orthogonal-polynomials}
\begin{cases}
	\tilde  p_0(x) = 1, \quad b_1^* \tilde p_1(x) = x - a_1^*,\\
	b_{n+1}^* \tilde p_{n+1}(x) = x \tilde p_n(x)  - a_{n+1}^* \tilde p_n(x) - b_n^* \tilde p_{n-1}(x), \quad n = 1,2,\dots,
\end{cases}
\end{equation}
where $a_n^* = \lambda_{n-1}^* + \mu_{n-1}^*$ and $b_n^* = \sqrt{\mu_{n-1}^* \lambda_n^*}$, with
\[
\begin{cases}
	\lambda_n^*  = \frac{n+c+a+1}{2n+2c+a+b+2}\frac{n+c+a+b+2}{2n+2c+a+b+3},\\
	\mu_n^* = \frac{n+c+1}{2n+2c+a+b+3}\frac{n+c+b+2}{2n+2c+a+b+4}, 
\end{cases}
\quad n \ge 0.
\]
Let $\tilde P_n$ be a primitive of $\tilde p_n$, that is, $\tilde P_n' = \tilde p_n$. Then our main result on Gaussian fluctuations around the limit is stated as follows.

\begin{theorem}\label{thm:CLT-intro}
Let $a, b > -1/2$ and $c > 0$ be given. Then the following hold.
\begin{itemize}
	\item[\rm(i)] For $n \ge 0$, as $N \to \infty$ with $\beta = 2c/N$, 
	\begin{equation}\label{intro:CLT}
		\sqrt{N} \left(\bra{L_{N, \beta}, \tilde P_n} - \bra{\nu_c, \tilde P_n} \right) \dto \Normal(0, \sigma_{\tilde P_n}^2),\quad \sigma_{\tilde P_n}^2 =  \frac{Z_c^*}{(n+1)(n+2c+a+b+2) }.
	\end{equation}
Here `$\dto$' denotes the convergence in distribution, $\Normal(\mu, \sigma^2)$ denotes the Gaussian distribution with mean $\mu$ and variance $\sigma^2$, and
\[
	Z_c^* = \bra{\nu_c, x(1-x)} = \frac{(c+a+1) (c+b+1) (c+ a + b + 2)}{(2c + a + b + 2)^2 (2c+ a + b + 3)}.
\]

\item[\rm(ii)] For any $M \in \N$, $\big\{\sqrt{N}\big(\bra{L_{N, \beta}, \tilde P_n} - \bra{\nu_c, \tilde P_n}\big)\big\}_{n=0}^M$ jointly converge in distribution to independent Gaussian random variables.
\end{itemize}
\end{theorem}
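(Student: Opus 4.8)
The plan is to use the tridiagonal matrix model of Killip--Nenciu \cite{Killip-Nenciu-2004}, which realizes the beta Jacobi ensemble \eqref{bJE} as the eigenvalues of a random Jacobi matrix $J_N$ whose entries are built from independent Beta-distributed random variables. For any polynomial $f$ one then has $\bra{L_{N,\beta}, f} = \frac1N \tr f(J_N)$, so the fluctuation in \eqref{intro:CLT} becomes
\[
	\sqrt N\bigl(\bra{L_{N,\beta},\tilde P_n} - \bra{\nu_c,\tilde P_n}\bigr) = \frac{1}{\sqrt N}\bigl(\tr \tilde P_n(J_N) - N\bra{\nu_c,\tilde P_n}\bigr).
\]
Since $\tilde P_n$ is a fixed polynomial and $J_N$ is tridiagonal, each diagonal entry $[\tilde P_n(J_N)]_{kk}$ depends only on the $O(1)$ many matrix entries within distance $\deg\tilde P_n$ of the site $k$; hence $\tr\tilde P_n(J_N)$ is a site sum of uniformly bounded, finite-range-dependent functionals of an independent sequence. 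I would prove the theorem by the method of moments applied to this process: as the target laws are Gaussian, hence moment-determinate, it suffices to show that all joint moments of $\bigl(\frac1{\sqrt N}(\tr\tilde P_n(J_N)-N\bra{\nu_c,\tilde P_n})\bigr)_{n=0}^M$ converge to the corresponding moments of independent centered Gaussians with the stated variances.

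First I would control the bias, showing $\Ex[\tr\tilde P_n(J_N)] = N\bra{\nu_c,\tilde P_n} + O(1)$, so that the centering in \eqref{intro:CLT} is correct; this refines the law of large numbers of \cite{Trinh-Trinh-Jacobi} to the next order and follows from the first-order asymptotics of the entry distributions. The crux is the second moment. Writing $\tr\tilde P_n(J_N) = \sum_k [\tilde P_n(J_N)]_{kk}$ and using the finite range of dependence, $\Var(\tr\tilde P_n(J_N))$ reduces to a site sum of local variances and nearest-neighbour covariances, which along $\beta = 2c/N$ converges to $N$ times a quadratic form $\Sigma(\tilde P_n,\tilde P_n)$. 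The key simplification comes from the primitive structure $\tilde P_n' = \tilde p_n$ together with the three-term recurrence \eqref{orthogonal-polynomials}: a discrete summation by parts expresses the relevant increments through the values of $\tilde p_n$, and the computation should reveal that the limiting covariance form admits the diagonal representation
\[
	\Sigma(f,g) = \sum_{k\ge 0} \sigma_{\tilde P_k}^2 \, \bra{\nu_c^*, f'\tilde p_k}\,\bra{\nu_c^*, g'\tilde p_k}.
\]
Because $\{\tilde p_k\}$ is orthonormal for $\nu_c^*$, evaluating on $\tilde P_m' = \tilde p_m$ and $\tilde P_n' = \tilde p_n$ collapses this to $\Sigma(\tilde P_m,\tilde P_n) = \sigma_{\tilde P_n}^2\,\delta_{mn}$, which already gives the vanishing of cross-covariances in part (ii). Extracting the exact eigenvalue requires feeding in the explicit recurrence coefficients $\lambda_n^*,\mu_n^*$ and the constant $Z_c^* = \bra{\nu_c, x(1-x)}$, and produces the reciprocal Jacobi-type factor $1/((n+1)(n+2c+a+b+2))$, i.e.\ $\sigma_{\tilde P_n}^2$ as stated.

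To upgrade this to full Gaussianity I would expand each $\tr\tilde P_n(J_N)$ as a weighted sum over closed walks on the path graph underlying the tridiagonal matrix. Centering the entries and using their independence across sites, a joint moment of order $k$ decomposes according to how the walks share fluctuating entries; in the regime $\beta=2c/N$ each configuration contributes at an order of $N$ governed by its number of free summation sites, while every third- or higher-order coincidence of fluctuating entries costs an extra negative power of $N$. Consequently only configurations in which the fluctuating entries are matched exactly in pairs survive at the order $N^{k/2}$, which is precisely the combinatorial content of Wick's theorem; it shows that the limit is jointly Gaussian with the covariance $\Sigma$ above. Combined with the diagonalization, this establishes (i) and (ii) at once.

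The main obstacle, in my view, is the second-moment and diagonalization step: one needs sufficiently sharp two-term asymptotics (means, variances, and nearest-neighbour covariances) of the Beta-distributed Jacobi entries when $\beta = 2c/N$, and must then carry out the summation by parts carefully enough that the emergent quadratic form is manifestly the $\nu_c^*$-pairing of the $\tilde p_k$'s, with all remainders shown to be $o(1)$ after dividing by $N$, uniformly over the finitely many indices $n \le M$. Matching the precise eigenvalue $(n+1)(n+2c+a+b+2)$ against the explicit $\lambda_n^*,\mu_n^*$ is the delicate part. By contrast, the higher-moment Wick step, though combinatorially involved, reduces to routine bookkeeping of walk multiplicities once the variance structure is in hand.
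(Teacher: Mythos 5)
Your scaffolding is sound as far as it goes, and it parallels what the paper actually does in Section~2: the bias control $\Ex[\bra{L_{N,\beta},p}]-\bra{\nu_c,p}=O(1/N)$ is Lemma~\ref{lem:rate-of-mean}, and joint Gaussianity of $\big(\sqrt N(\bra{L_{N,\beta},x^k}-\bra{\nu_c,x^k})\big)_k$ with \emph{some} limiting covariance $\Sigma$ is Theorem~\ref{thm:LLN-CLT} (proved there by a martingale-difference CLT after locally approximating the slowly varying Beta entries by i.i.d.\ sequences; your finite-range Wick bookkeeping is a plausible substitute for that step). The genuine gap is the crux you flag yourself: the diagonalization. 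You write that a discrete summation by parts ``should reveal'' the representation $\Sigma(f,g)=\sum_k \sigma_{\tilde P_k}^2\,\bra{\nu_c^*,f'\tilde p_k}\bra{\nu_c^*,g'\tilde p_k}$, but by bilinearity this identity \emph{is} the theorem (cross-covariances vanish and the diagonal entries equal $\sigma_{\tilde P_n}^2$), so the argument is circular exactly where the new content lies. Nothing in the static matrix model supplies the eigenvalue $(n+1)(n+2c+a+b+2)$: the local-variance computation yields only the moment covariances $\sigma_{k,l}$, which satisfy the intractable recursion of Forrester--Mazzuca quoted in the introduction, and the paper explicitly remarks that extracting the decomposition $\diag(\sigma_{\tilde P_0}^2,\dots)=C\Sigma C^\top$ directly from that recursion is an open problem --- i.e.\ precisely the route you propose is the one the authors could not carry out.

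What the paper does instead is dynamical, and the two missing mechanisms are worth naming. First, for the stationary beta Jacobi process the limiting fluctuation field satisfies $\bra{\xi,P}(t)=\bra{\zeta,P}+\bra{\eta,P}(t)+\int_0^t\bra{\xi,\cL(p)}(s)\,ds$ with $\cL$ as in \eqref{L-transfromation0}, and the nontrivial spectral identity of Proposition~\ref{lem:L-operator}, $\cL(p_n)=-\gamma_nP_n+const$ with $\gamma_n=(n+1)(n+2c+a+b+2)$, turns this into an Ornstein--Uhlenbeck equation for $\bra{\xi,P_n}$; this identity for the associated Jacobi polynomials is where the factor you call ``delicate'' actually comes from, and it has no counterpart in your scheme. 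Second, the variance is not computed from entrywise covariances at all: stationarity in $t$ forces $\Ex[\bra{\zeta,P_n}^2]=\alpha_n^2/(2\gamma_n)$ with $\alpha_n^2=\bra{\nu_c,2x(1-x)p_n^2}=2Z_c^*\bra{\nu_c^*,p_n^2}$, which is the stated $\sigma_{\tilde P_n}^2$ after normalizing $p_n$, and the independence in (ii) follows because the driving martingale limits $\bra{\eta,P_n}$ are independent (orthogonality of $\{p_n\}$ w.r.t.\ $2x(1-x)\nu_c(x)dx$, via the covariance formula \eqref{covariance-fg}) while stationarity kills the cross-covariances $\Cov(\bra{\zeta,P_n},\bra{\zeta,P_m})e^{-(\gamma_n+\gamma_m)t}$ (Theorem~\ref{thm:CLT-orthogonal}). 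To repair your proposal you would need an independent, purely static proof of the $\cL$-eigenrelation's consequence for $\Sigma$ --- e.g.\ verifying within the covariance recursion that $\Sigma(\tilde P_m,\tilde P_n)=\delta_{mn}Z_c^*/\gamma_n$ --- and as it stands no such verification, nor any mechanism producing the $1/(2\gamma_n)$ stationary-variance factor, is sketched.
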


Note that the condition $a, b > -1/2$ is assumed when dealing with beta Jacobi processes (see Sect.~3). The above result is expected to hold for $a, b > -1$. Note also that the convergence~\eqref{intro:CLT} holds under the condition that $\beta = 2c/N$. We still get an analogous result when $\beta N \to 2c$ by replacing $\bra{\nu_c, \tilde P_n}$ by $\Ex[\bra{L_{N, \beta}, \tilde P_n}] $, that is, as $\beta N \to 2c$,
\[
	\sqrt{N} \left(\bra{L_{N, \beta}, \tilde P_n} - \Ex[\bra{L_{N, \beta}, \tilde P_n}] \right) \dto \Normal(0, \sigma_{\tilde P_n}^2).
\]

We conclude the introduction with the following remark.
Theorem~\ref{thm:LLN-CLT} below implies that
for any $k, l \ge 1$, the limit  
\[
	\sigma_{k, l} = \lim_{N \to \infty; \beta = 2c/N} N \Cov [\bra{L_{N, \beta}, x^k}, \bra{L_{N, \beta}, x^l}]
\]
exists and that for any $M \ge 1$, 
\[
	\sqrt{N} \left(\bra{L_{N, \beta}, x^k} - \bra{\nu_c, x^k} \right)_{k=1}^M \dto \Normal_M(0, \Sigma), \quad \Sigma = (\sigma_{k,l})_{k,l=1}^M,
\]
where $\Normal_M(0, \Sigma)$ denotes the $M$-dimensional Gaussian distribution with mean zero and covariance matrix $\Sigma$. Besides, the limiting covariance $\sigma_{k,l}$ satisfies the following relation (Eq.~(5.23) in \cite{Forrester-2021})
\[
	\sigma_{k, l} = \frac{1}{k+2c+a+b+1} \left(l (u_l - u_{k+l}) - b \sum_{i=1}^{k-1} \sigma_{i, l} - 2c \sum_{i=1}^{k-1} u_i \sigma_{k-i, l} \right).
\]
Here $u_k = \bra{\nu_c, x^k}$ is the $k$th moment of the limiting measure $\nu_c$. There is also a recursive formula for $\{u_k\}$ as given in equation \eqref{moments-uk}. Now for each $n$, we can assume without loss of generality that the constant term in a polynomial  $\tilde P_n$ is zero. Given $M \ge 1$, let $C$ be an $M \times M$ coefficients matrix of $\{\tilde P_n\}_{n=0}^{M-1}$, that is,
\[
	\begin{pmatrix}
		\tilde P_0\\
		\tilde P_1\\
		\vdots\\
		\tilde P_{M-1}
	\end{pmatrix}
	= C 
	\begin{pmatrix}
		x\\
		x^2\\
		\vdots\\
		x^{M}
	\end{pmatrix}.
\]
It is clear that $C$ is a lower triangular matrix. Then Theorem~\ref{thm:CLT-intro}(ii) implies that
\begin{equation}
	\diag(\sigma_{\tilde P_0}^2, \dots, \sigma_{\tilde P_{M-1}}^2) = C \Sigma C^\top,
\end{equation}
where $\diag(a_1, \dots, a_M)$ denotes the diagonal matrix with $\{a_n\}_{n=1}^M$ in the diagonal, and $C^\top$ denotes the transpose of $C$.
It would be an interesting problem to study properties of the limiting variance matrix $\Sigma$ like the above decomposition which is related to  the LDL decomposition directly from the recursive formula for $\sigma_{k,l}$.

The paper is organized as follows. In Sect.\ 2, we establish the LLN and CLTs for polynomial test functions by using the random tridiagonal matrix model. Sect.\ 3 proves a dynamic version of  Theorem~\ref{thm:CLT-intro} for stationary beta Jacobi processes. Several properties of associated Jacobi polynomials needed in arguments in Sect.\ 3 are introduced in Sect.\ 4. Finally, the proof of Theorem~\ref{thm:CLT-intro} is given in Sect.\ 5.

\section{Limit theorems for the empirical distribution of beta Jacobi ensembles}
In this section, we establish the convergence to a limit and Gaussian fluctuations around the limit of the empirical distribution by using the tridiagonal random matrix model. We begin with introducing the model.
Denote by $\Beta(a,b)$ the beta distribution with parameters $a, b > 0$.
Let $p_1, \dots, p_N$ and $q_1, \dots, q_{N - 1}$ be independent random variables having beta distributions with parameters
\begin{align*}
	p_n &\sim \Beta((N - n) \tfrac{\beta}2 + a + 1, (N - n) \tfrac{\beta}2 + b + 1),\\
	q_n &\sim \Beta((N - n) \tfrac{\beta}2, (N - n - 1) \tfrac{\beta}2 + a + b + 2),
\end{align*}
where $a, b > -1$ and $\beta > 0$. 
Define
\begin{align*}
	s_n &= {p_n(1 - q_{n - 1})},\quad n = 1, \dots, N, \quad (q_0 = 0),\\
	t_n &= {q_n(1 - p_n)}, \quad n = 1, \dots, N - 1,
\end{align*}
and form a (random symmetric) tridiagonal matrix
\[
	J_{N, \beta} = \begin{pmatrix}
		\sqrt{s_1}	\\
		\sqrt{t_1}	&\sqrt{s_2}		\\
		&\ddots	&\ddots \\
		&& \sqrt{t_{N - 1}}	& \sqrt{s_N}
	\end{pmatrix}
	 \begin{pmatrix}
		\sqrt{s_1} 	&\sqrt{t_1}	\\
			&\sqrt{s_2}		&\sqrt{t_2}		\\
		&&\ddots	&\ddots \\
		&&& \sqrt{s_N}
	\end{pmatrix}.
\]
Then the eigenvalues $\lambda_1 \le \lambda_2 \le \cdots \le \lambda_N$ of $J_{N, \beta}$ are distributed according to the beta Jacobi ensemble~\eqref{bJE} \cite{Killip-Nenciu-2004}.

For given $a, b > -1$ and $c > 0$, we form the following random (infinite) tridiagonal matrix
\[
J^{(\infty)}_c = \begin{pmatrix}
		\sqrt{s_1^{(\infty)}}	\\
		\sqrt{t_1^{(\infty)}}	&\sqrt{s_2^{(\infty)}}		\\
		&\ddots	&\ddots \\
	\end{pmatrix}
	 \begin{pmatrix}
		\sqrt{s_1^{(\infty)}} 	&\sqrt{t_1^{(\infty)}}	\\
			&\sqrt{s_2^{(\infty)}}		&\sqrt{t_2^{(\infty)}}		\\
		&&\ddots	&\ddots \\
	\end{pmatrix},
\]
where 
\[
\begin{cases}
	s_n^{(\infty)} = {p_n^{(\infty)}(1 - q_{n - 1}^{(\infty)})},\\
	t_n^{(\infty)} = {q_n^{(\infty)}(1 - p_n^{(\infty)})}, 
\end{cases}
\quad n = 1,  2, \dots, \quad (q_0^{(\infty)} = 0),
\]
with $\{p_n^{(\infty)}\}$ and $\{q_n^{(\infty)}\}$ being two sequences of i.i.d.\ (independent identically distributed) random variables independent of each other with the common distributions $ \Beta(c + a + 1, c + b + 1)$ and $\Beta(c, c + a + b + 2)$, respectively. The matrix $J^{(\infty)}_c$ is the limit in distribution of $J_{N, \beta}$ as $N \to \infty$ with $N \beta \to 2c$ in the sense that for each fixed $i$, 
\[
	J_{N, \beta}(i, i) \dto J^{(\infty)}_c(i, i), \quad J_{N, \beta}(i, i\pm1) \dto J^{(\infty)}_c(i, i\pm1).
\]
The joint convergence also holds.

Let $\Sp_{N, \beta}$ be the spectral measure of $J_{N, \beta}$, the unique probability measure satisfying 
\[
	\bra{\Sp_{N, \beta}, x^n} = (J_{N, \beta})^n(1,1), \quad n = 0,1,\dots.
\]
Relations between the spectral measure of tridiagonal matrices and orthogonal polynomials will be discussed in Sect.\ \ref{sect:Duals}.
It turns out that $\Sp_{N, \beta}$ has the following expression
\[
	\Sp_{N, \beta} = \sum_{i=1}^N w_i \delta_{\lambda_i},
\]
where $w_i = v_i(1)^2$ with $\{v_i\}_{i=1}^N$ normalized eigenvectors corresponding to the eigenvalues $\{\lambda_i\}_{i=1}^N$ of $J_{N, \beta}$. Moreover, it is known that $(w_i)_{i=1}^N$ are independent of the eigenvalues and have the Dirichlet distribution with parameters $(\beta/2, \dots, \beta/2)$ (see \cite{Killip-Nenciu-2004} or Chapter 3 in \cite{Forrester-book}). Consequently, for any test function $f$, 
\[
	\Ex[\bra{\Sp_{N, \beta}, f}] = \Ex \bigg[ \sum_{i=1}^N w_i f(\lambda_i)\bigg] = \sum_{i=1}^N \Ex[w_i] \Ex[f(\lambda_i)] = \frac1N \Ex \bigg[\sum_{i=1}^N f(\lambda_i) \bigg] = \Ex[\bra{L_{N, \beta}, f}].
\]
In particular, when $f = x^k$, the above relation implies that
\begin{equation}\label{same-mean}
	\Ex[\bra{L_{N, \beta}, x^k}] = \Ex \bigg[\frac1N \sum_{i=1}^N (J_{N, \beta})^k (i, i)\bigg] = \Ex[(J_{N,\beta})^k(1,1)].
\end{equation}

Let $\Sp_c$ be the spectral measure of $J^{(\infty)}_c$, the unique probability measure satisfying
\[
	\bra{\Sp_c, x^n} = (J^{(\infty)}_c)^n(1,1), \quad n = 0,1,\dots.
\]
It is unique because entries of $J^{(\infty)}_c$ are bounded (see Sect.~\ref{sect:Duals}). Let $\nu_c$ be the mean of $\Sp_c$. The convergence in distribution of $\{p_n\}$, $\{q_n\}$ together with their independence implies that as $\beta N \to 2c$,
\begin{equation}\label{convergence-of-mean}
	\Ex[(J_{N, \beta})^k(1,1)] \to \Ex[(J^{(\infty)}_c)^k(1,1)] = \bra{\nu_c, x^k}.
\end{equation}
When $\beta = 2c/N$, we estimate the rate of convergence as follows.
\begin{lemma}\label{lem:rate-of-mean}
For any $k \in \N$, in the regime where $N \to \infty$ with $\beta = 2c/N$, there is a constant $C>0$ such that
\[
	|\Ex[(J_{N, \beta})^k(1,1)] - \Ex[(J^{(\infty)}_c)^k(1,1)] | \le \frac{C}{N}.
\]
Consequently,
for any polynomial $p$, as $N \to \infty$ with $\beta = 2c/N$, 
\begin{equation}\label{convergence-of-mean-with-rate}
	\sqrt{N} \big(\Ex[\bra{L_{N, \beta}, p}] - \bra{\nu_c, p} \big) \to 0.
\end{equation}
\end{lemma}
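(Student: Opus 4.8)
The plan is to exploit the banded structure of both matrices to localize the computation to the top-left corner, reduce everything to explicit beta moments, and then estimate the rate through the $1/N$-dependence of the beta parameters. First I would record that $J_{N,\beta} = B B^\top$ with $B$ lower bidiagonal, so $J_{N,\beta}$ is symmetric tridiagonal with diagonal entries $s_n + t_{n-1}$ (setting $t_0 = 0$) and off-diagonal entries $\sqrt{s_n t_n}$, and identically for $J^{(\infty)}_c$. Expanding $(J^k)(1,1) = \sum J(1,i_1)J(i_1,i_2)\cdots J(i_{k-1},1)$ as a sum over nearest-neighbour paths of length $k$ from $1$ to $1$, each path reaches index at most $1 + \lfloor k/2\rfloor =: m$, and every edge $(i,i+1)$ is crossed an even number of times, so each factor $\sqrt{s_i t_i}$ enters to an even power. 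Hence $(J^k)(1,1) = P_k(s_1,t_1,\dots,s_m,t_m)$ for a fixed polynomial $P_k$ that depends on neither $N$ nor on whether the matrix is finite or infinite; in particular $(J^{(\infty)}_c)^k(1,1) = P_k(s_1^{(\infty)},t_1^{(\infty)},\dots,s_m^{(\infty)},t_m^{(\infty)})$ with the \emph{same} $P_k$.

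Next I would take expectations. Since each $p_n$ occurs in $s_n$ (through $p_n$) and in $t_n$ (through $1-p_n$), and each $q_n$ occurs in $t_n$ (through $q_n$) and $s_{n+1}$ (through $1-q_n$), expanding the fixed polynomial $P_k$ and using the independence of $\{p_n\}$ and $\{q_n\}$ expresses $\Ex[(J_{N,\beta})^k(1,1)]$ as a fixed polynomial in the finite collection of single-variable beta moments $\Ex[p_n^j(1-p_n)^l]$ and $\Ex[q_n^j(1-q_n)^l]$ with $n \le m$ and $j+l \le k$. For $X \sim \Beta(\alpha,\gamma)$ these are the explicit functions $\Ex[X^j(1-X)^l] = \frac{\Gamma(\alpha+\gamma)}{\Gamma(\alpha)\Gamma(\gamma)}\frac{\Gamma(\alpha+j)\Gamma(\gamma+l)}{\Gamma(\alpha+\gamma+j+l)}$, which for integer $j,l$ are rational in $(\alpha,\gamma)$. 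With $\beta = 2c/N$ the parameters are affine in $1/N$, namely $p_n \sim \Beta(c+a+1-\tfrac{cn}{N}, c+b+1-\tfrac{cn}{N})$ and $q_n \sim \Beta(c-\tfrac{cn}{N}, c+a+b+2-\tfrac{c(n+1)}{N})$, converging as $N\to\infty$ to the parameters of $p_n^{(\infty)}, q_n^{(\infty)}$. A first-order Taylor expansion in the variable $1/N$ of each moment, followed by insertion into the fixed polynomial (whose partial derivatives are bounded near the limiting arguments), then yields $|\Ex[(J_{N,\beta})^k(1,1)] - \Ex[(J^{(\infty)}_c)^k(1,1)]| \le C/N$ with $C$ depending only on $k$.

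For the consequence I would combine \eqref{same-mean}, which gives $\Ex[\bra{L_{N,\beta},x^k}] = \Ex[(J_{N,\beta})^k(1,1)]$, with the definition of $\nu_c$ as the mean of $\Sp_c$, which gives $\bra{\nu_c,x^k} = \Ex[(J^{(\infty)}_c)^k(1,1)]$. Writing $p = \sum_k c_k x^k$ and summing the per-degree bounds gives $|\Ex[\bra{L_{N,\beta},p}] - \bra{\nu_c,p}| \le C'/N$, whence $\sqrt{N}\,|\Ex[\bra{L_{N,\beta},p}] - \bra{\nu_c,p}| \le C'/\sqrt{N} \to 0$, which is \eqref{convergence-of-mean-with-rate}.

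The main obstacle I anticipate is the uniformity and regularity underlying the moment estimate: I must confirm that the relevant beta moments are regular (no blow-up) rational functions of the parameters at the limiting values, so that an $O(1/N)$ perturbation of the parameters produces an $O(1/N)$ perturbation of the moment with constants that do not degenerate. This is precisely where the hypotheses $a,b>-1$ and $c>0$ enter: they keep the parameter sums (which appear in the denominators through $\Gamma(\alpha+\gamma+j+l)$) bounded away from the non-positive integers for all large $N$, so the derivatives stay bounded uniformly over the finitely many indices $n\le m$ and exponents $j+l\le k$. By contrast, the localization to index $\le m$ and the polynomial structure of $(J^k)(1,1)$ are routine once the tridiagonal form $BB^\top$ is written out.
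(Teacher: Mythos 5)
Your proposal is correct and takes essentially the same route as the paper's proof: both expand $(J_{N,\beta})^k(1,1)$ as a fixed polynomial in finitely many of the variables $\{p_i,q_i\}$ (the paper's $\ell_k$ playing the role of your $m$), reduce its expectation to products of beta moments by independence, and obtain the $O(1/N)$ rate from smooth dependence of those moments on parameters that are perturbed by $O(1/N)$ when $\beta=2c/N$ --- the paper via the mean value theorem (Lemma~\ref{lem:beta-distributions}(ii)), you via a first-order Taylor expansion of the explicit Gamma-ratio formula. Your deduction of \eqref{convergence-of-mean-with-rate} from \eqref{same-mean} and linearity also matches the paper's.
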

We need the following result to prove the above lemma.

\begin{lemma}\label{lem:beta-distributions}
\begin{itemize}
\item[\rm (i)]	Assume that $X \sim \Beta(a, b)$ has the beta distribution with parameters $a, b > 0$. Then for any $k \in \N$,
\[
	\Ex[X^k] = \prod_{r=1}^k \frac{a+r}{a+b+r}.
\]

\item[\rm(ii)] For given $a, b > 0, 0 <\varepsilon < a \wedge b$ and $K \in \{1,2,\dots\}$, there exists a constant $C > 0$ such that for any $Y \sim \Beta(a+\delta_a, b + \delta_b)$ with $|\delta_a|, |\delta_b| < \varepsilon$, and for any $k \in \{1,2,\dots,K\}$,
\[
	|\Ex[Y^k] - \Ex[X^k]| \le C \varepsilon.
\]
Here $a \wedge b = \min\{a, b\}$.
\end{itemize}
\end{lemma}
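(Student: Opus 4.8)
The two parts are explicit computations: part (i) is the classical beta moment formula, and part (ii) is a quantitative continuity estimate built on it. I expect no deep obstacle here; the only point requiring care is uniformity of the constant in part (ii), and the role of the lemma is simply to supply the $O(\varepsilon)$ input needed for Lemma~\ref{lem:rate-of-mean}.

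For part (i), I would start from the density of $X \sim \Beta(a,b)$ and write the $k$-th moment as a ratio of beta functions: integrating $x^k$ against the density replaces the normalizing constant $B(\cdot,\cdot)$ by one with the first parameter shifted by $k$, so $\Ex[X^k]$ is a quotient of the form $B(a+k,b)/B(a,b)$ (up to the normalization convention used in the paper). Converting each beta function via $B(x,y) = \Gamma(x)\Gamma(y)/\Gamma(x+y)$ and applying the functional equation $\Gamma(z+1) = z\Gamma(z)$, the quotient of Gamma functions telescopes into the finite product $\prod_{r=1}^k \frac{a+r}{a+b+r}$. This is a direct calculation.

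For part (ii), the key point is that, by part (i), the $k$-th moment is an explicit smooth function of the parameters. Define $m_k(s,t) = \prod_{r=1}^k \frac{s+r}{s+t+r}$, so that $\Ex[X^k] = m_k(a,b)$ and $\Ex[Y^k] = m_k(a+\delta_a, b+\delta_b)$. Since $\varepsilon < a \wedge b$, every admissible pair $(a+\delta_a, b+\delta_b)$ with $|\delta_a|, |\delta_b| < \varepsilon$ lies in the compact box $Q = [a-\varepsilon, a+\varepsilon] \times [b-\varepsilon, b+\varepsilon]$, on which every factor $s+r$ and $s+t+r$ is bounded away from $0$; hence $m_k$ is $C^1$ on $Q$. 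Differentiating the logarithm gives $\partial_s \log m_k = \sum_{r=1}^k \frac{t}{(s+r)(s+t+r)}$ and $\partial_t \log m_k = -\sum_{r=1}^k \frac{1}{s+t+r}$, both bounded on $Q$ by constants depending only on $a,b,\varepsilon$ and $k$. Because $0 \le m_k \le 1$ on $Q$ (it is a moment of a $[0,1]$-valued variable), the gradient $\nabla m_k = m_k \nabla \log m_k$ is likewise bounded on $Q$, say by $M_k$. The mean value theorem applied along the segment from $(a,b)$ to $(a+\delta_a, b+\delta_b)$ then yields $|\Ex[Y^k] - \Ex[X^k]| \le M_k(|\delta_a| + |\delta_b|) \le 2 M_k \varepsilon$.

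Finally, since $K$ is fixed, I take $C = 2\max_{1 \le k \le K} M_k$, which is finite and depends only on $a,b,\varepsilon,K$; this gives the stated bound simultaneously for all $k \in \{1,\dots,K\}$. The main (and only mild) obstacle is ensuring this constant is uniform over both the perturbations and the finitely many values of $k$, which is handled by the compactness of $Q$ and the finiteness of $K$.
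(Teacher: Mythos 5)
Your proof is correct and takes essentially the same route as the paper, whose entire proof reads that (i) is a fundamental fact about beta distributions and (ii) follows from (i) by the mean value theorem for functions of two variables; you have simply supplied the details (the Gamma-function telescoping, the compact box $Q$, the bounded gradient of $m_k$, and the maximum over the finitely many $k \le K$). Your hedge about the normalization convention is wise, since the printed product $\prod_{r=1}^{k}\frac{a+r}{a+b+r}$ matches the standard convention only after an index shift (it should run $r=0,\dots,k-1$ for $\Beta(a,b)$ with density proportional to $x^{a-1}(1-x)^{b-1}$), but this off-by-one in the statement does not affect the argument for (ii).
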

\begin{proof}
	(i) is a fundamental result on beta distributions. (ii) is deduced from (i) by using the mean value theorem for functions of two variables. 
\end{proof}

\begin{proof}[Proof of Lemma~\rm\ref{lem:rate-of-mean}]
For given $k \in \N$, it is clear that when $N \ge k$, $(J_{N, \beta})^k(1,1)$ is a polynomial of $\{s_i, t_i\}_{1\le i \le \ell_k}$, and thus, is a polynomial of $\{p_i, q_i\}_{1\le i \le \ell_k}$, where $\ell_k$ depends only on $k$. A general term in that polynomial is of the form
\[
	const \times \prod_{i=1}^{\ell_k} p_i^{a_i} q_i^{b_i},
\]
with $a_i, b_i \in \{0,1,2,\dots\}$ and $\sum_i (a_i + b_i) \le 2k$. By the independence of $\{p_i\}_i$ and $\{q_i\}_i$, its mean value is a product of moments of beta distributions.
Now, we make use of Lemma~\ref{lem:beta-distributions}(ii) to deduce that in the regime where $ \beta = 2c/N$,
\[
	|\Ex[(J_{N, \beta})^k(1,1)] - \Ex[(J^{(\infty)}_c)^k(1,1)] | \le \frac{C}{N}, 
\]
for some constant $C>0$ depending on $k, a, b$ and $c$. The convergence~\eqref{convergence-of-mean-with-rate} is a direct consequence of the above estimate. The proof is complete.
\end{proof}

Here is the main result in this section.
\begin{theorem}\label{thm:LLN-CLT}
Let $a, b > -1$ and $c > 0$ be fixed. Then the following hold.
\begin{itemize}
\item[\rm(LLN)]
As $N \to \infty$ with $\beta N \to 2c$, the empirical distribution $L_{N, \beta}$ converges weakly to $\nu_c$, almost surely. Equivalently, for any polynomial $p$, as $N \to \infty$ with $\beta N \to 2c$,
\begin{equation}
	\bra{L_{N, \beta}, p} \to \bra{\nu_c, p}, \quad a.s..
\end{equation}

\item[\rm(CLT)]
For any polynomial $p$, as $N \to \infty$ with $\beta = 2c/N$, 
\begin{equation}
	N \Var[\bra{L_{N, \beta}, p}] \to \sigma_p^2, \quad \sqrt{N}\big(\bra{L_{N, \beta}, p} - \bra{\nu_c, p}\big) \dto \Normal(0, \sigma_p^2),
\end{equation}
where $\sigma_p^2 \ge 0$ is a constant. Equivalently, there are jointly Gaussian random variables $\{\zeta_n\}_{n \ge 1}$ such that for any $M \in \N$, 
\begin{equation}
	\left (\sqrt{N}(\bra{L_{N, \beta}, x^n} - \bra{\nu_c, x^n})	\right )_{n=1}^M \dto (\zeta_n)_{n=1}^M \sim \Normal_M(0, \Sigma),
\end{equation}
where $\Sigma = (\sigma_{k, l})_{k, l = 1}^M$ is the limiting covariance matrix, 
\[
	\sigma_{k, l} = \lim_{N \to \infty; \beta = 2c/N} N \Cov [\bra{L_{N, \beta}, x^k}, \bra{L_{N, \beta}, x^l}].
\]
\end{itemize}
\end{theorem}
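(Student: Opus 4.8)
The plan is to run the method of moments directly on the tridiagonal model $J_{N,\beta}$, exploiting that moments of $L_{N,\beta}$ are normalized traces of powers. By \eqref{same-mean} and linearity, for any polynomial $p$ one has $\bra{L_{N,\beta}, p} = \frac1N \tr p(J_{N,\beta}) = \frac1N \sum_{i=1}^N p(J_{N,\beta})(i,i)$. The crucial structural observation is that, since $J_{N,\beta}$ is tridiagonal with entries built from $\{p_j,q_j\}$, each diagonal entry $p(J_{N,\beta})(i,i)$ is a sum over closed walks of bounded length on the path graph and hence a polynomial in $\{p_j,q_j : |j-i| \le \ell\}$ for some $\ell$ depending only on $\deg p$. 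As $\{p_j\}$ and $\{q_j\}$ are independent across $j$, the array $Z_i := p(J_{N,\beta})(i,i)$ is finite-range dependent: $Z_i$ and $Z_{i'}$ are independent once $|i-i'| > 2\ell$. Moreover all eigenvalues lie in $[0,1]$, so every $Z_i$ is uniformly bounded and all moments below are finite. For the joint statement it suffices, by the Cram\'er--Wold device and linearity of $p\mapsto\bra{L_{N,\beta},p}$, to treat a single arbitrary polynomial $p$; the replacement of $\Ex[\bra{L_{N,\beta},p}]$ by $\bra{\nu_c,p}$ is then justified by Lemma~\ref{lem:rate-of-mean}.

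Write $S_N = \sqrt{N}\big(\bra{L_{N,\beta},p} - \Ex[\bra{L_{N,\beta},p}]\big) = N^{-1/2}\sum_{i=1}^N (Z_i - \Ex Z_i)$. For the variance, expand $\Var[\bra{L_{N,\beta},p}] = N^{-2}\sum_{i,i'}\Cov(Z_i, Z_{i'})$; finite-range dependence kills all terms with $|i-i'| > 2\ell$, leaving at most $O(N)$ nonzero, uniformly bounded covariances, whence the variance is $O(1/N)$ and $N\Var[\bra{L_{N,\beta},p}] \to \sigma_p^2$. The limit exists because, away from the two boundary regions of width $O(\ell)$, the law of the window $\{p_j,q_j : |j-i|\le\ell\}$ converges (as $N\to\infty$, $\beta=2c/N$) to that of the corresponding stationary window of $J^{(\infty)}_c$, so each band covariance converges and $\sigma_p^2$ is identified with a convergent sum of stationary covariances of the infinite model. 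The same expansion applied to the fourth central moment gives, again by finite-range dependence and boundedness, $\Ex\big[(\bra{L_{N,\beta},x^k} - \Ex[\bra{L_{N,\beta},x^k}])^4\big] = O(1/N^2)$, so $\sum_N \Prob\big(|\bra{L_{N,\beta},x^k} - \Ex[\bra{L_{N,\beta},x^k}]| > \varepsilon\big) < \infty$ and Borel--Cantelli, combined with the mean convergence \eqref{convergence-of-mean}, yields the almost sure statement in (LLN).

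For the CLT I would show $\Ex[S_N^r] \to$ the $r$th moment of $\Normal(0,\sigma_p^2)$ for every $r\ge1$. Expanding $\Ex[S_N^r] = N^{-r/2}\sum_{i_1,\dots,i_r}\Ex\big[\prod_l (Z_{i_l}-\Ex Z_{i_l})\big]$, group the indices into clusters, the connected components of the equivalence relation generated by $|i-i'|\le2\ell$. Distinct clusters have disjoint windows, so the expectation factorizes over clusters, and a singleton cluster contributes $\Ex[Z_i - \Ex Z_i]=0$; thus only configurations in which every cluster has size $\ge2$ survive. A configuration with clusters of sizes $c_1,\dots,c_m$ (with $\sum_j c_j = r$) admits $\sim N^m$ placements (each cluster has diameter $O(1)$ and $\sim N$ anchor positions) with uniformly bounded cluster expectations, so it contributes at order $N^{m-r/2}$; since $c_j\ge2$ forces $m\le r/2$, this tends to $0$ unless $m=r/2$, i.e.\ $r$ is even and every cluster is a pair. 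The surviving pair configurations reproduce exactly the Gaussian (Wick) pairing, each pair contributing the limiting covariance, so $\Ex[S_N^r]\to (r-1)!!\,\sigma_p^r$ for even $r$ and $\to0$ for odd $r$. Boundedness of the $Z_i$ makes the limit moment-determinate, so the method of moments gives $S_N\dto\Normal(0,\sigma_p^2)$; Cram\'er--Wold upgrades this to the joint convergence to $\Normal_M(0,\Sigma)$, and Lemma~\ref{lem:rate-of-mean} recenters at $\bra{\nu_c,x^n}$.

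The main obstacle is the combinatorial and asymptotic control of this moment expansion for a triangular array: the summands $Z_i$ are neither identically distributed nor exactly stationary (the beta parameters depend on both $i$ and $N$), so one must verify uniformly that cluster contributions behave as in the stationary case, that the $O(\ell)$ boundary terms are negligible after the $N^{-r/2}$ scaling, and that all cross-moments needed to identify $\sigma_p^2$ and $\Sigma$ genuinely converge to the stationary covariances of $J^{(\infty)}_c$. Making the cluster-size power counting rigorous, in particular showing that clusters of size $\ge3$ are of strictly lower order and that the surviving pairings pass to the stationary limit, is the technical heart of the argument and is precisely the process-level refinement of the moment method alluded to in the introduction.
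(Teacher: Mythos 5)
Your route is genuinely different from the paper's. The paper proves (CLT) by a martingale-difference central limit theorem with a filtration built from the $p_n$'s and $q_n$'s, following \cite{Nakano-Trinh-2018}, after locally approximating the two sequences by i.i.d.\ ones; the (LLN) is deferred to \cite{Trinh-Trinh-Jacobi}. You instead run the classical moment/cluster expansion for bounded, finite-range-dependent triangular arrays. Most of your skeleton is sound: the finite-range dependence of $Z_i = p(J_{N,\beta})(i,i)$ (since $J_{N,\beta}$ is tridiagonal and $s_i, t_i$ involve only $p_i, q_{i-1}, q_i$), the uniform bound $|Z_i| \le \max_{[0,1]}|p|$, the vanishing of singleton clusters after centering, the $N^{m-r/2}$ power counting forcing Wick pairings, moment determinacy, Cram\'er--Wold, the recentering via Lemma~\ref{lem:rate-of-mean}, and the fourth-moment/Borel--Cantelli argument for (LLN), which indeed works in the full regime $\beta N \to 2c$.

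There is, however, one genuine error: your identification of the limits of the band covariances, and hence of $\sigma_p^2$. With $\beta = 2c/N$ one has $p_j \sim \Beta\big(c(1-\tfrac jN) + a + 1,\, c(1-\tfrac jN) + b + 1\big)$ and similarly for $q_j$, so the law of the window $\{p_j, q_j : |j - i| \le \ell\}$ around $i \approx tN$ converges to the i.i.d.\ window with parameter $c(1-t)$, i.e.\ to the stationary window of $J^{(\infty)}_{c(1-t)}$ --- \emph{not} of $J^{(\infty)}_c$. The deviation from the corner law is not a boundary effect of width $O(\ell)$ but a macroscopic parameter drift across the entire bulk; the paper's statement that $J_{N,\beta} \dto J^{(\infty)}_c$ entrywise holds only for \emph{fixed} $i$. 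Consequently the per-position covariance sums converge to $t$-dependent limits, and the correct identification is $\sigma_p^2 = \int_0^1 v_p\big(c(1-t)\big)\,dt$, where $v_p(c')$ denotes the lag-sum of stationary covariances of the diagonal entries of $p(J^{(\infty)}_{c'})$, rather than the single stationary sum at parameter $c$. (The same drift is visible at the level of means: $\Ex[Z_i]$ depends on $i/N$, and it is only the exact exchangeability identity \eqref{same-mean} that makes the \emph{average} of the $\Ex[Z_i]$ coincide with $\Ex[(J_{N,\beta})^k(1,1)]$.) The fix is local to your argument, since the cluster expansion never used stationarity --- only uniform boundedness and finite range: redo the variance step with position-dependent window limits and Riemann-sum convergence, after which the Wick pairing count still yields $\Ex[S_N^r] \to (r-1)!!\,\sigma_p^r$ with the corrected $\sigma_p^2$. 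This position-dependent local approximation is exactly what the paper's cited template \cite[\S 2.2]{Nakano-Trinh-2018} carries out.
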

\begin{proof}
(LLN) For any polynomial $p$, the convergence of the mean value of $\bra{L_{N, \beta}, p}$ follows directly from equation~\eqref{convergence-of-mean}. The almost sure convergence of the form 
\[
	\bra{L_{N, \beta}, p} - \Ex[\bra{L_{N, \beta}, p}] \to 0, \quad a.s.,
\]
can be deduced by using the random matrix model $J_{N, \beta}$ (see \cite{Trinh-Trinh-Jacobi}). Thus, we get the first part proved.

(CLT) We only present key ideas because arguments are almost the same as those used in \cite{Nakano-Trinh-2018} for the Gaussian case. First, it is clear that $\bra{L_{N, \beta}, p}$ is a polynomial of $\{p_n, q_n\}_{n=1}^N$. If the two sequences $\{p_n\}$ and $\{q_n\}$ were i.i.d.\ and independent of each other, an application of the martingale difference central limit theorem with a filtration constructed from $p_n$'s and $q_n$'s leads to the following CLT (cf.\ \cite[\S 2.1]{Nakano-Trinh-2018})
\begin{equation}\label{CLTiid}
	N \Var[\bra{L_{N, \beta}, p}] \to \sigma_p^2, \quad \sqrt{N}\big(\bra{L_{N, \beta}, p} - \Ex[\bra{L_{N, \beta}, p}]\big) \dto \Normal(0, \sigma_p^2),\quad (\sigma_p^2 \ge 0).
\end{equation}
Second, in the regime where $\beta = 2c/N$, the two sequences $\{p_n\}$ and $\{q_n\}$ are locally approximated by i.i.d.\ sequences, and thus, arguments as used in \cite[\S 2.2]{Nakano-Trinh-2018} for the Gaussian case also work here to yield the CLT~\eqref{CLTiid}. Third, the mean value $\Ex[\bra{L_{N,\beta}, p}]$ can by replaced by $\bra{\nu_c, p}$ in \eqref{CLTiid} by Lemma~\ref{lem:rate-of-mean}. Finally, the equivalent statement for moments is standard, which completes the proof of Theorem~\ref{thm:LLN-CLT}.
\end{proof}

\section{Limit theorems for the empirical measure process of beta Jacobi processes}

\subsection{Beta Jacobi processes}
Let $a, b > -1/2$ and $\beta > 0$. Consider the following system of stochastic differential equations (SDE)
\begin{align}
	d\phi_i 
	&={\sqrt 2} db_i + \frac{2a + 1}{2}\cot \frac{\phi_i}2 dt - \frac{2b+1}{2} \tan \frac{\phi_i}{2} dt \notag\\
	&\quad+ \frac\beta 2 \sum_{j: j \neq i}^N \left(\cot(\frac{\phi_i}2 + \frac{\phi_j}2) + \cot(\frac{\phi_i}2 - \frac{\phi_j}2)\right) dt, \quad(i = 1, 2, \dots, N),	\label{SDEs-phi}
\end{align}
with initial conditions $\phi_i(0) = y_i, (i = 1, 2, \dots, N)$, where $\bfy = (y_1,  \dots, y_N) \in \bar \cA$, the closure of
\[
	\cA= \left\{\bfx = (x_1, \dots, x_N) \in \R^N : 0 < x_1 < \cdots < x_N < {\pi} 	\right\}.
\]
Here $\{b_i\}_{i = 1}^N = \{b_i(t)\}_{i=1}^N$ are independent standard Brownian motions. The system of SDEs~\eqref{SDEs-phi} has a unique strong solution under the constraint that $(\phi_1(t), \dots, \phi_N(t)) \in \bar \cA$ for all $t \in [0, \infty)$ c.f.~\cite{Demni-2010}. To deal with the singularity of the drift term at the boundary of $\cA$, the theory of multivariate SDEs developed in \cite{Cepa-1995, Cepa-Lepingle-1997} has been used. 

Define a function $\Psi \colon \R^N \to (-\infty, \infty]$ as
\begin{align*}
	\Psi(\bfx) &= (2a + 1) \sum_i \Big( -  \log \sin \frac{x_i}2\Big) + (2b + 1) \sum_i  \Big(-\log \cos \frac{x_i}2\Big) \\
	&\quad + \beta \sum_{i < j} \Big\{- \log \sin \Big(\frac{x_i}2 + \frac{x_j}2 \Big) -  \log \sin \Big(\frac{x_j}2 - \frac{x_i}2 \Big) \Big\}, \quad \bfx = (x_1, \dots, x_N) \in \cA,
\end{align*}
and $\Psi(\bfx) = \infty$, otherwise. Note that under the condition $a, b > -1/2$ and $\beta > 0$, the function $\Psi$ is convex with the domain $\cA$. For $\bfx \in \cA$, it is clear that 
\[
	\frac{\partial \Psi}{\partial x_i}(\bfx) = - \frac{2a + 1}{2}  \cot \frac{x_i}2 +  \frac{2b + 1}{2} \tan \frac{x_i}2 - \frac \beta 2 \sum_{j \neq i} \left \{\cot \Big(\frac{x_i}2 + \frac{x_j}2 \Big) + \cot \Big(\frac{x_i}2 - \frac{x_j}2 \Big)\right \},
\]
and thus, the system of SDEs~\eqref{SDEs-phi} for $\bfphi_t = (\phi_i(t))_{i=1}^N$ can be expressed as
\begin{equation}\label{SDEs-phi-grad}
	d\bfphi_t = {\sqrt 2} d\bfb_t - \nabla \Psi(\bfphi_t) dt.
\end{equation}
Here $\bfb_t = (b_i(t))_{i=1}^N$ is a standard Brownian motion in $\R^N$, and $\nabla \Psi = (\frac{\partial \Psi}{\partial x_i})_{i=1}^N$. It is worth mentioning that the above expression makes it easy to use the theory of multivariate SDEs since the subderivative of a convex function is a maximal monotone operator.

Beta Jacobi processes are then defined via a change of variables $\lambda_i = (\sin \frac{\phi_i}2)^2, i = 1, \dots, N$. They satisfy the following system of SDEs
\begin{equation}\label{SDEs-J}
\begin{cases}
	d \lambda_i =  \sqrt{2\lambda_i (1 - \lambda_i)} db_i + \left(a + 1 - (a+b+2) \lambda_i + \frac\beta2\sum_{j: j \neq i} \frac{2 \lambda_i(1 - \lambda_i)} {\lambda_i - \lambda_j} \right) dt, \\
	\lambda_i(0) = x_i, 
\end{cases}
\quad i=1,\dots, N.
\end{equation}
Recall that $a, b > -1/2$ and $\beta > 0$.

For any $\varphi(t, \bfx)  \in C_0^{2} ((0, \infty) \times \cA)$, the space of continuous functions with continuous derivatives up to second order and with compact support, using It\^o's formula, we get that for any $T > 0$,
\begin{align*}
	f(T, \bfphi_T) &= f(0, \bfphi_0) + \sum_{i=1}^N \int_0^T \partial_i f(t, \bfphi_t)\sqrt 2 db_i(t) \\
	&\quad + \int_0^T \left( \partial_t f(t, \bfphi_t)  - \sum_{i=1}^N \partial_i f(t, \bfphi_t) \partial_i \Psi(\bfphi_t) + \sum_{i=1}^N \partial_i^2 f(t, \bfphi_t)\right) dt.
\end{align*}
Here $\partial_t$ (resp.\ $\partial_i$) denotes the partial derivative with respect to $t$ (resp.\ $x_i$).
Since $f$ has compact support, $f(0, \bfphi_0) = 0$ and $f(T, \bfphi_T) = 0$ when $T$ is large enough. Taking the mean values of both sides in the above formula with noting that the diffusion part is a martingale, we get that
\[
	\int_0^\infty \int_\cA \left( \partial_t f(t, \bfz)  - \sum_{i=1}^N \partial_i f(t, \bfz) \partial_i \Psi(\bfz) + \sum_{i=1}^N \partial_i^2 f(t, \bfz)\right) d\nu_t^{[\bfy]} (d\bfz) dt = 0,
\]
where $\nu_t^{[\bfy]}$ is the law of the unique strong solution $\bfphi_t$ with initial condition $\bfphi_0 = \bfy \in \bar \cA$. In other words, the law $\nu_t^{[\bfy]}$ satisfies the following Fokker--Planck equation 
\begin{equation}\label{FP}
	\frac{\partial \mu}{\partial t} = \mathrm{div\,} (\nabla \Psi \mu) + \Delta\, \mu, \quad \mu = (\mu_t(dx))_{t > 0}
\end{equation}
in a distributional sense with initial condition $\delta_{\bfy}$. 

The function $\Psi$ is in fact $K$-convex with 
\[
	K = \frac{a + b + 1}{2} >0,
\]
that is, the function $\Psi(x_1, \dots, x_N) - \frac K 2(x_1^2 + \cdots + x_N^2)$ is convex, which can be verified by a direct calculation.
Now, from the theory of gradient flows, we deduce that $\nu_t^{[\bfy]}$ is the gradient flow of a $K$-geodesically convex functional in the space of probability measures on $\R^N$ with finite second moment endowed with the Wasserstein distance $W_2$. Refer to the monograph \cite{AGS-book}, especially to Subsection 11.2.1 therein for terminologies. Moreover, the probability measure $\mu_{inv}$ on $\cA$ or on $\bar \cA$ with density proportional to $e^{-\Psi(\bfx)}$ is the unique invariant measure of the Fokker--Planck equation~\eqref{FP} or of the system of SDEs~\eqref{SDEs-phi}. Then the $K$-convexity implies the following long time behavior (see \cite[\S11.2]{AGS-book})
\begin{equation}\label{equilibrium}
	W_2(\nu_t^{[\bfy]}, \mu_{inv}) \le e^{-K t} W_2(\delta_{\bfy}, \mu_{inv}).
\end{equation}
Note that the push-forward measure of $\mu_{inv}$ under the transformation $\bflambda = \sin^2 \frac{\bfphi}2$, is the beta Jacobi ensemble~\eqref{bJE}. Thus, a beta Jacobi process, starting from any initial condition, converges in distribution to the corresponding beta Jacobi ensemble as $t \to \infty$.

\subsection{Law of large numbers for the empirical measure process}
From now on, we consider stationary beta Jacobi processes. More precisely, we consider the processes $\{\lambda_i\}_{i=1}^N$ solving the system of SDEs~\eqref{SDEs-J} with initial data $(x_1, \dots, x_N)$ being random, $\cF_0$ measurable, distributed according the Jacobi beta ensemble~\eqref{bJE}, where $\bfb_t = (b_i(t))_{i=1}^N$ is a standard Brownian motion in $\R^N$ with respect to (w.r.t.) the filtration $\{\cF_t\}_{t\ge 0}$. In other words, the Brownian motion is independent of the initial data. 

The law of large numbers for the empirical measure process
\[
	\mu_t^{(N)} = \frac1N \sum_{i = 1}^N \delta_{\lambda_i(t)}
\]
has been established in \cite{Trinh-Trinh-Jacobi} by using a moment method for processes. 
Let us briefly introduce the result here. Recall the assumption on the parameters that $a, b > -1/2,c > 0$ and $\beta = 2c/N$. For $f \in C^2([0,1])$, using It\^o's formula, we deduce that
\begin{align*}
	d \bra{\mu_t^{(N)}, f} &= \frac1N \sum_{i = 1}^N \sqrt{2 \lambda_i(1 - \lambda_i)} f'(\lambda_i) db_i \\
	&+  \frac1N \sum_{i=1}^N f'(\lambda_i) \left( a + 1 - (a + b + 2) \lambda_i + \frac c N \sum_{j : j \neq i} \frac{2\lambda_i(1 - \lambda_i)}{\lambda_i - \lambda_j} \right) dt \\
	&+ \frac1N \sum_{i = 1}^N \lambda_i(1- \lambda_i) f''(\lambda_i) dt,
\end{align*}
which can be rewritten as 
\begin{align}
	d \bra{\mu_t^{(N)}, f} &= dM_t^{(N)}  +  \bra{\mu_t^{(N)}, (a + 1) f'(x) - (a + b + 2) x f'(x) + x(1-x) f''(x)} dt \notag\\
	&\quad + c \iint \frac{x(1-x) f'(x) - y(1-y) f'(y)}{x - y} d\mu_t^{(N)}(x) d\mu_t^{(N)}(y) dt  \notag\\
	&\quad - \frac cN \bra{\mu_t^{(N)}, \{x(1-x)f'(x)\}'} dt, \label{Ito-f}
\end{align}
where 
\[
	M_t^{(N)} = \frac1N \sum_{i = 1}^N \int_0^t \sqrt{2 \lambda_i(1 - \lambda_i)} f'(\lambda_i) db_i 
\]
is a martingale with the quadratic variation 
\[
	[M^{(N)} ]_t = \frac1N\int_0^t \bra{\mu_s^{(N)}, 2 x (1-x) f'(x)^2} ds.
\]
It is worth noting that the above identity holds when $\{\lambda_i(t)\}_{i=1}^N$ are distinct, which holds for almost every $t$.

For $k = 1,2, \dots,$ let 
\[	
	S_k^{(N)}(t) = \bra{\mu_t^{(N)}, x^k} = \frac1N \sum_{i = 1}^{N}  \lambda_i(t)^k
\] 
be the $k$th moment process of $\mu_t^{(N)}$. Then equation \eqref{Ito-f} with $f = x^k$ can be expressed in the following integral form 
\begin{align}
	S_k^{(N)} (t) &= \bra{\mu_0^{(N)}, x^k} + M_k^{(N)}(t)  - k (2c + a + b + k + 1) \int_0^t S_{k}^{(N)} (s)ds \notag \\
	&\quad +   k (a + k ) \int_0^t S_{k - 1}^{(N)}(s) ds \notag\\
	&\quad + c k \int_0^t \sum_{i = 0}^{k - 1} S_{i}^{(N)}(s) S_{k - 1 - i}^{(N)}(s) ds -  c k \int_0^t \sum_{j = 1}^{k-1} S_{j}^{(N)}(s) S_{k - j}^{(N)}(s) ds \notag\\
	&\quad - \frac cN \int_0^t (k^2 S_{k-1}^{(N)}(s) - k(k+1)S_k^{(N)}(s))ds \label{Ito-sk1}.
\end{align}
Here $M_k^{(N)}$ denotes the corresponding martingale part. Equation \eqref{Ito-sk1} is an initial value ordinary differential equation (ODE) of $\int_0^t S_k^{(N)}(s) ds$, and thus, we can express $S_k^{(N)}(t)$ as a functional of $\{S_l^{(N)}(t) \}_{0 \le l \le k -1}$ and $M_k^{(N)}(t)$ by using the following fundamental result stated without proof. 
\begin{lemma}\label{lem:ODE}
	Assume that a continuous function $X(t)$ satisfies the equation
\[
	X(t) =x_0 -\gamma \int_0^t X(s) ds + F(t),\quad (t \ge 0),
\]
where $x_0 \in \R$, $\gamma$ is a constant and $F(t)$ is a continuous function with $F(0) = 0$. Then 
\[
	X(t) =x_0 e^{-\gamma t} + F(t) - \gamma e^{-\gamma t} \int_0^t e^{\gamma s} F(s) ds, \quad (t \ge 0).
\]
\end{lemma}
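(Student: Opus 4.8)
The plan is to reduce the linear Volterra integral equation to an elementary first-order linear ordinary differential equation and solve it with an integrating factor. The key substitution is to set
\[
	Y(t) = \int_0^t X(s)\, ds,
\]
so that $Y(0) = 0$. Since $X$ is continuous, $Y$ is continuously differentiable with $Y'(t) = X(t)$, and the hypothesis rewrites as $Y'(t) = x_0 - \gamma Y(t) + F(t)$, i.e.
\[
	Y'(t) + \gamma Y(t) = x_0 + F(t), \qquad Y(0) = 0.
\]

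First I would multiply through by the integrating factor $e^{\gamma t}$, recognize the left-hand side as $\frac{d}{dt}\bigl(e^{\gamma t} Y(t)\bigr)$, and integrate from $0$ to $t$ using $Y(0) = 0$ to obtain
\[
	e^{\gamma t} Y(t) = \int_0^t e^{\gamma s}\bigl(x_0 + F(s)\bigr)\, ds.
\]
This determines $Y$ uniquely (by the standard uniqueness for first-order linear ODEs), hence determines $X = x_0 - \gamma Y + F$ uniquely, which is exactly the identity asserted in the lemma. Then I would substitute back, using the elementary identity $\gamma \int_0^t e^{\gamma s}\, ds = e^{\gamma t} - 1$ (valid for every $\gamma \in \R$, including $\gamma = 0$), to simplify the $x_0$-term as $\gamma e^{-\gamma t}\int_0^t e^{\gamma s} x_0\, ds = x_0(1 - e^{-\gamma t})$. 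Collecting terms yields
\[
	X(t) = x_0 - \gamma Y(t) + F(t) = x_0 e^{-\gamma t} + F(t) - \gamma e^{-\gamma t}\int_0^t e^{\gamma s} F(s)\, ds,
\]
as claimed.

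There is no serious obstacle here; the only point requiring a moment's care is that $F$ is assumed merely continuous, not differentiable, so one cannot legitimately differentiate the original integral equation to obtain an ODE for $X$ directly. Introducing $Y$ circumvents this, since $Y$ inherits $C^1$-regularity from the continuity of $X$ alone, independently of any smoothness of $F$. Alternatively, one could verify by direct substitution that the proposed formula solves the integral equation and then invoke Gronwall's inequality on the difference of any two solutions to conclude uniqueness; the integrating-factor route packages existence and uniqueness together and is the one I would present.
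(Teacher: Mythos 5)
Your proof is correct, and its algebra checks out (including the degenerate case $\gamma=0$ via the identity $\gamma\int_0^t e^{\gamma s}\,ds = e^{\gamma t}-1$). The paper states Lemma~\ref{lem:ODE} explicitly \emph{without proof}, so there is nothing to compare against; your integrating-factor argument via $Y(t)=\int_0^t X(s)\,ds$ is the standard derivation the authors evidently had in mind, and your remark that one must introduce $Y$ rather than differentiate the integral equation directly --- since $F$ is only assumed continuous, $X$ need not be differentiable --- is exactly the one subtlety worth flagging, which you handle correctly.
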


For fixed $T > 0$, let $\X$ be the space $C([0, T]) = C([0, T], \R)$ of continuous functions on $[0, T]$ endowed with the supremum norm. Then $S_k^{(N)}$ and $M_k^{(N)}$ are random elements on $\X$. We can use induction in $k$ to show that $S_k^{(N)}$ converges in probability to a deterministic limit as $\X$-valued random elements. Indeed, by Doob's martingale inequality, the martingale part $M_k^{(N)}$ is easily shown to converge in probability to zero in $\X$. The initial value $\bra{\mu_0^{(N)}, x^k}$ converges in probability to $\bra{\nu_c, x^k}$ by Theorem~\ref{thm:LLN-CLT}(LLN).  Then, by induction, we deduce that $(S_k^{(N)}(t))_{0 \le t \le T}$ converges in probability to a deterministic limit $m_k(t)$ in $\X$. By this approach, the limit $m_k(t)$, for $k \ge 1$, satisfies the following ODE 
\begin{align*}
	m_k'(t) &= -k (2c + a + b + k + 1) m_k(t) + k (a + k ) m_{k - 1}(t) \notag\\
	&\quad + ck \sum_{i = 0}^{k-1} m_i(t) m_{k - 1 - i}(t) - ck \sum_{j = 1}^{k-1} m_j(t) m_{k  - j}(t), 
\end{align*}
with initial value $m_k(0) =  \lim_{N \to \infty} \bra{\mu_0^{(N)}, x^k} = \bra{\nu_c, x^k}.  $
Here $m_0(t) \equiv 1$.

Recall that $u_k = \bra{\nu_c, x^k}$ denotes the $k$th moment of $\nu_c$. Note that by the stationary property, for any fixed $t$, $S_k^{(N)}(t)$ converges to $u_k$ in probability (by Theorem~\ref{thm:LLN-CLT}(LLN)). Therefore, the limit $m_k(t)\equiv u_k$ is a constant function. Consequently, the moments $\{u_k\}$ satisfy the following relation
\begin{equation}\label{moments-uk}
 u_k =  \frac{1}{2c + a + b + k + 1} \bigg( (a + k ) u_{k - 1}
	 + c	 \sum_{i = 0}^{k-1} u_i  u_{k - 1 - i} - c \sum_{j = 1}^{k-1} u_j u_{k  - j} \bigg), \quad (k \ge 1).
\end{equation}
In summary, for stationary beta Jacobi point processes, the following law of large numbers at the process level holds.
\begin{theorem}\label{thm:LLN-processes}
As $N \to \infty$ with $\beta = 2c/N$, the $k$th moment process $S_k^{(N)}$, as random element in $ 
C([0, T])$, converges in probability to a constant function $u_k = \bra{\nu_c, x^k}$. Consequently, for any polynomial $f$, as $N \to \infty$ with $\beta = 2c/N$,
\begin{equation}\label{LLN-processes}
	\bra{\mu_t^{(N)}, f} \Pto \bra{\nu_c, f}	\quad \text{in}\quad C([0, T]),
\end{equation}
meaning that for any $\varepsilon > 0$, 
\[
	\lim_{\substack{N \to \infty;\\ \beta = 2c/N}} \Prob \left(\sup_{0 \le t \le T}|\bra{\mu_t^{(N)}, f} - \bra{\nu_c, f}| \ge \varepsilon \right) = 0.
\]
\end{theorem}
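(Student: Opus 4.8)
The plan is to argue by induction on $k$, treating each moment process $S_k^{(N)}$ as a random element of $C([0,T])$ and extracting its limit from the integral equation~\eqref{Ito-sk1} via Lemma~\ref{lem:ODE}. The base case $k=0$ is immediate, since $S_0^{(N)}(t)\equiv 1 = u_0$. For the inductive step I would assume that $S_l^{(N)} \Pto u_l$ in $C([0,T])$ for every $0\le l\le k-1$ and deduce the same for $S_k^{(N)}$.

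First I would dispose of the martingale part $M_k^{(N)}$. Since $\mu_t^{(N)}$ is supported on $[0,1]$, the integrand $2x(1-x)(kx^{k-1})^2$ is bounded there, so the quadratic variation obeys $[M_k^{(N)}]_T \le C_k/N$ for a constant $C_k$ depending only on $k$. Doob's $L^2$ maximal inequality then gives $\Ex\big[\sup_{0\le t\le T}|M_k^{(N)}(t)|^2\big]\le 4\,\Ex[[M_k^{(N)}]_T]\le 4C_k/N \to 0$, whence $M_k^{(N)} \Pto 0$ in $C([0,T])$. The initial value satisfies $\bra{\mu_0^{(N)}, x^k}\Pto u_k$ by Theorem~\ref{thm:LLN-CLT}(LLN) applied to the stationary initial data.

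Next I would recast~\eqref{Ito-sk1} in the form of Lemma~\ref{lem:ODE}. Collecting the terms proportional to $\int_0^t S_k^{(N)}(s)\,ds$, it reads $S_k^{(N)}(t) = \bra{\mu_0^{(N)}, x^k} - \gamma_N\int_0^t S_k^{(N)}(s)\,ds + F^{(N)}(t)$, with $\gamma_N = k(2c+a+b+k+1) - ck(k+1)/N$ and $F^{(N)}$ gathering $M_k^{(N)}$, the linear term in $S_{k-1}^{(N)}$, the two convolution sums (whose indices never exceed $k-1$), and the remaining $O(1/N)$ contribution. Because each $S_l^{(N)}$ is bounded by $1$ and, for $l\le k-1$, converges in $C([0,T])$ to the constant $u_l$, the products converge in probability uniformly in $t$ and so do their time integrals; combined with $M_k^{(N)}\Pto 0$ this yields $F^{(N)}\Pto \bar F$ in $C([0,T])$ for an explicit affine function $\bar F$ of $t$. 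Since $\gamma_N\to \gamma := k(2c+a+b+k+1)$ and $\bra{\mu_0^{(N)},x^k}\Pto u_k$, the explicit solution formula of Lemma~\ref{lem:ODE}, which depends continuously on $(x_0,\gamma,F)$ uniformly on $[0,T]$, gives $S_k^{(N)}\Pto m_k$ in $C([0,T])$, where $m_k$ solves the limiting linear ODE displayed before the theorem.

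Finally I would identify the limit. By stationarity, $S_k^{(N)}(t) \overset{d}{=} S_k^{(N)}(0) = \bra{\mu_0^{(N)},x^k}\Pto u_k$ at each fixed $t$, so the continuous limit $m_k$ is forced to be the constant $m_k(t)\equiv u_k$; equivalently one checks directly that the constant $u_k$ solves the limiting ODE by means of the recursion~\eqref{moments-uk}. This closes the induction, and~\eqref{LLN-processes} for an arbitrary polynomial $f$ follows by linearity. The hard part is not any single computation but securing convergence in the uniform topology of $C([0,T])$ rather than merely at fixed times: this is exactly what forces Doob's inequality for the martingale and, more delicately, the uniform-in-$t$ continuity of the solution map of Lemma~\ref{lem:ODE} together with the uniform boundedness of the moment processes, so that the nonlinear product terms and their integrals pass to the limit uniformly at every stage of the induction.
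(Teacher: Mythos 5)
Your proposal is correct and follows essentially the same route as the paper: induction on $k$, killing the martingale part via Doob's inequality, handling the initial value by Theorem~\ref{thm:LLN-CLT}(LLN), solving the integral equation~\eqref{Ito-sk1} with Lemma~\ref{lem:ODE}, and identifying the constant limit $u_k$ through stationarity. Your explicit treatment of the $O(1/N)$ term (absorbing the $S_k^{(N)}$ contribution into $\gamma_N$) and of the uniform-in-$t$ continuity of the solution map are faithful elaborations of steps the paper leaves implicit.
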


\subsection{Central limit theorem for moment processes}
In this subsection, we refine a moment method developed in \cite{NTT-2023}. The work in \cite{NTT-2023} deals with systems of SDEs with trivial initial condition while we consider here stationary processes.
Let 
\[
	\tilde S_k^{(N)}(t) = \sqrt{N} \left(S_k^{(N)}(t) - u_k \right).
\]
Then for each $k$, we deduce from equation~\eqref{Ito-sk1} that
\begin{align}
	\tilde S_k^{(N)} (t) &= \sqrt{N} (\bra{\mu_0^{(N)}, x^k} - u_k) + \sqrt N M_k^{(N)}(t)  - k (2c + a + b + k + 1) \int_0^t \tilde S_{k}^{(N)} (s)ds \notag \\
	&\quad +   k (a + k ) \int_0^t \tilde S_{k - 1}^{(N)}(s) ds \notag\\
	&\quad + c k \int_0^t \sum_{i = 0}^{k - 1} \Big(\sqrt N S_{i}^{(N)}(s) S_{k - 1 - i}^{(N)}(s)  - \sqrt N u_i u_{k-1-i} \Big)ds \notag \\
	&\quad -  c k \int_0^t \sum_{j = 1}^{k-1}\Big(\sqrt N S_{j}^{(N)}(s) S_{k - j}^{(N)}(s)  - \sqrt N u_j u_{k-j} \Big) ds \notag\\
	&\quad - \frac c{\sqrt N} \int_0^t (k^2 S_{k-1}^{(N)}(s) - k(k+1)S_k^{(N)}(s))ds \label{Ito-sk}.
\end{align}
Here we have used the relation~\eqref{moments-uk} for $\{u_k\}$. 
Although the last term depends on $S_k^{(N)}$, it is considered as an error term because it vanishes in the limit when $N \to \infty$.

Note that $\mu_0^{(N)}$ has the same distribution as the empirical distribution of the corresponding Jacobi beta ensemble. Thus Theorem~\ref{thm:LLN-CLT}(CLT) tells us that $\{\sqrt{N}(\bra{\mu_0^{(N)}, x^k} - u_k)\}_{k\ge1}$ jointly converge in distribution to jointly Gaussian random variables $\{\zeta_k\}_{k \ge 1}$, meaning that for every $M \in \N$, 
\[
	\big\{\sqrt{N}\big(\bra{\mu_0^{(N)}, x^k} - \bra{\nu_c, x^k}\big)\big\}_{k=1}^M \dto \{\zeta_k\}_{k=1}^M.
\]
We then define the limit with respect to any polynomial test function as a linear combination of $\{\zeta_k\}$, that is, for $f = \sum_{k=0}^M a_k x^k$,
\[
	\sqrt{N}(\bra{\mu_0^{(N)}, f} - \bra{\nu_c, f}) \dto \sum_{k=1}^M a_k \zeta_k =: \bra{\zeta, f}.
\]

Our next aim is to show that the martingale parts $\{\sqrt N M_k\}_{k \ge 1}$, as random elements in $C([0, T])$, jointly converge in distribution to Gaussian processes $\{\eta_k\}_{k \ge 1}$ chosen to be independent of $\{\zeta_k\}_{k \ge 1}$. Moreover, the initial values and the martingale parts jointly converge to $\zeta$'s and $\eta$'s. We use the same notation $\dto$ to denote the convergence in distribution of $C([0, T])$-valued random elements.

For a polynomial $f$, let 
\[
	\Phi^{(f; N)}(t) = \frac1 {\sqrt{N}} \sum_{i = 1}^N \int_0^t \sqrt{2 \lambda_i(s)(1 - \lambda_i(s))} f'(\lambda_i(s)) db_i(s).
\]
Then $\Phi^{(f; N)}$ is a continuous martingale with $\Phi^{(f; N)}(0) = 0$, a.s.. It is clear that for polynomials $f$ and $g$, the cross-variation of $\Phi^{(f; N)}(t)$ and $\Phi^{(g; N)}(t)$ is given by 
\begin{align*}
	[\Phi^{(f; N)}, \Phi^{(g; N)}]_t &= \frac1N \sum_{i=1}^N \int_0^t 2 \lambda_i(s)(1 - \lambda_i(s)) f'(\lambda_i(s)) g'(\lambda_i(s)) ds \\
	&= \int_0^t \bra{\mu^{(N)}_s, 2x(1-x)f'(x)g'(x)}ds.
\end{align*}

\begin{lemma}\label{lem:joint-convergence-polynomials}
Let $f_1, f_2, \dots, f_n$ be polynomials of degree at least $1$. Then $\{\Phi^{(f_k; N)}\}_{k=1}^n$ jointly converge to Gaussian processes $\{\eta_k\}_{k=1}^n$ of mean zero and  covariance 
	\[
		\Ex[\eta_k(s) \eta_l(t)] = \bra{\nu_{c}, 2x(1-x)f_k'(x) f_l'(x)} \times (s \wedge t).
	\] 
Here recall that $s \wedge t = \min\{s, t\}$.
\end{lemma}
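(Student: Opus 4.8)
The plan is to view $\Phi^{(N)} = (\Phi^{(f_1;N)},\dots,\Phi^{(f_n;N)})$ as an $\R^n$-valued continuous martingale vanishing at the origin and to invoke the multidimensional central limit theorem for continuous (local) martingales, in the spirit of the moment method of \cite{NTT-2023}: if the matrix of cross-variations converges in probability, for each fixed $t$, to a deterministic, continuous limit $C(t)$ whose increments are nonnegative definite, then $\Phi^{(N)}$ converges in distribution in $C([0,T],\R^n)$ to the centered continuous Gaussian process with independent increments and covariance $\Ex[\eta_k(s)\eta_l(t)] = C_{kl}(s\wedge t)$. Since each $\Phi^{(f_k;N)}$ is continuous, no separate Lindeberg or jump condition must be checked; the only input is the convergence of the brackets.

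First I would record that the cross-variation formula already displayed before the statement gives
\[
 [\Phi^{(f_k;N)},\Phi^{(f_l;N)}]_t = \int_0^t \bra{\mu_s^{(N)}, g_{kl}}\,ds, \qquad g_{kl}(x) = 2x(1-x)f_k'(x)f_l'(x).
\]
Because $f_k, f_l$ are polynomials, $g_{kl}$ is a polynomial, so Theorem~\ref{thm:LLN-processes} applies: $\bra{\mu_s^{(N)},g_{kl}} \to \bra{\nu_c,g_{kl}}$ in probability, uniformly in $s \in [0,T]$. Integrating in $s$ and using this uniform control yields, for every $t \in [0,T]$,
\[
 [\Phi^{(f_k;N)},\Phi^{(f_l;N)}]_t \Pto C_{kl}(t) := \bra{\nu_c, 2x(1-x)f_k'(x)f_l'(x)}\,t,
\]
which identifies the candidate limiting covariance as the one claimed.

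Next I would verify that $C(t)$ satisfies the structural hypotheses of the martingale CLT. The map $t \mapsto C(t)$ is linear, hence continuous, and for $s \le t$ the increment is $C(t)-C(s) = (t-s)\,A$ with $A_{kl} = \bra{\nu_c, 2x(1-x)f_k'(x)f_l'(x)}$. The matrix $A$ is nonnegative definite, since for any $v \in \R^n$,
\[
 \sum_{k,l} v_k v_l A_{kl} = \Big\langle \nu_c,\, 2x(1-x)\Big(\sum_k v_k f_k'(x)\Big)^2 \Big\rangle \ge 0,
\]
because $x(1-x) \ge 0$ on the support $[0,1]$ of $\nu_c$. Thus $C(t)-C(s)$ is nonnegative definite and the hypotheses hold. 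Applying the continuous martingale CLT then gives $\{\Phi^{(f_k;N)}\}_{k=1}^n \dto \{\eta_k\}_{k=1}^n$; the linearity of $C$ in $t$ forces the limit to be a continuous Gaussian martingale with deterministic bracket, hence to have independent increments, so $\Ex[\eta_k(s)\eta_l(t)] = A_{kl}\,(s\wedge t)$, as asserted.

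The main obstacle is the rigorous passage from the pointwise-in-$t$ convergence of the brackets to the functional convergence of the martingales, that is, checking that the chosen form of the martingale CLT genuinely applies to our setting (continuous martingales on $[0,T]$ with a deterministic limiting bracket) and in particular supplies the tightness that comes bundled with it; here it is precisely the uniform-in-$s$ control furnished by Theorem~\ref{thm:LLN-processes} that makes the bracket convergence strong enough. A secondary point requiring care is that the It\^o and cross-variation identities hold only for almost every $t$ (where the $\lambda_i(t)$ are distinct), but since the brackets are time integrals this exceptional null set is harmless.
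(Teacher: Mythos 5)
Your proposal is correct and follows essentially the same route as the paper: compute the cross-variations, apply Theorem~\ref{thm:LLN-processes} to get their convergence in probability to the deterministic linear limit $\bra{\nu_c, 2x(1-x)f_k'f_l'}\,t$, and invoke a functional CLT for continuous (local) martingales (the paper cites \cite{Rebolledo-1980}). The only cosmetic difference is that the paper additionally upgrades the bracket convergence to $L^q$ via the uniform boundedness coming from $\supp \mu_t^{(N)} \subset [0,1]$ to match the cited theorem's hypotheses, whereas your convergence-in-probability formulation (valid for continuous martingales, with positive semidefiniteness of the limit checked as you do) is an equally standard version of the same result.
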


\begin{proof}For any $k$ and $l$, as $N \to \infty$, Theorem~\ref{thm:LLN-processes} implies that 
\[
	[\Phi^{(f_k; N)}, \Phi^{(f_l; N)}]_t \Pto \bra{\nu_c, 2x(1-x)f_k'(x) f_l'(x)} \times t, \quad \text{in}\quad C([0, T]). 
\]
In particular, the convergence in probability holds for each $t > 0$. Moreover, for each fixed $t$, the sequence $[\Phi^{(f; N)}, \Phi^{(g; N)}]_t$ is uniformly bounded because $\mu_t^{(N)}$ is supported in $[0,1]$. It follows that $[\Phi^{(f_k; N)}, \Phi^{(f_l; N)}]_t$ converges in $L^q$ for any $q \in [1, \infty)$. Then the desired convergence is a consequence of a general CLT in \cite{Rebolledo-1980} for local martingales. The proof is complete.
\end{proof}

Let $\{\bra{\eta, f}\}$ be a family of Gaussian processes indexed by polynomials $f$ of mean zero and covariance 
\begin{equation}\label{covariance-fg}
	\Ex[\bra{\eta, f}(s) \bra{\eta, g}(t)] = \bra{\nu_{c}, 2x(1-x) f'(x) g'(x)}\times(s \wedge t),
\end{equation}
which is independent of $\{\bra{\zeta, f}\}$. In particular, for each polynomial $f$, the Gaussian process $\bra{\eta, f} / \sqrt{\bra{\nu_c, 2x(1-x)f'(x)^2}}$ is a standard Brownian motion.

\begin{lemma}\label{lem:joint-convergence-with-initial}
Let $f_1, f_2, \dots, f_n$ be polynomials of degree at least $1$. Then 
\[
	\Big\{\{\sqrt{N}(\bra{\mu_0^{(N)}, f_k} - \bra{\nu_c, f_k} ) \}_{k=1}^n, \{\Phi^{(f_k; N)}\}_{k=1}^n \Big\} \dto \Big\{ \{\bra{\zeta, f_k}\}_{k=1}^n, \{\bra{\eta, f_k}\}_{k=1}^n \Big\}.
\]
\end{lemma}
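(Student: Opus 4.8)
The plan is to derive the joint convergence from the two marginal convergences already established---Theorem~\ref{thm:LLN-CLT}(CLT) for the initial values and Lemma~\ref{lem:joint-convergence-polynomials} for the martingale parts---by exploiting that the driving Brownian motion $\bfb_t$ is independent of the $\cF_0$-measurable initial data. Set $V_N := \{\sqrt N(\bra{\mu_0^{(N)}, f_k} - \bra{\nu_c, f_k})\}_{k=1}^n$ and $\Phi^{(N)} := \{\Phi^{(f_k;N)}\}_{k=1}^n$. Since $V_N$ converges in distribution and $\Phi^{(N)}$ is tight in $C([0,T])^n$, the pair $(V_N, \Phi^{(N)})$ is tight in $\R^n \times C([0,T])^n$; every subsequential limit then has marginals $\zeta_\bullet := \{\bra{\zeta, f_k}\}_{k=1}^n$ and $\eta_\bullet := \{\bra{\eta, f_k}\}_{k=1}^n$, so the only remaining point is to prove that these two limits are \emph{independent}.

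To this end it suffices to show, for all bounded continuous $h \colon \R^n \to \R$ and $g \colon C([0,T])^n \to \R$, that $\Ex[h(V_N)\, g(\Phi^{(N)})] \to \Ex[h(\zeta_\bullet)]\,\Ex[g(\eta_\bullet)]$. I would condition on $\cF_0$: since $V_N$ is $\cF_0$-measurable,
\[
	\Ex[h(V_N)\, g(\Phi^{(N)})] = \Ex[h(V_N)\, \Psi_N], \qquad \Psi_N := \Ex[g(\Phi^{(N)}) \mid \cF_0].
\]
The crux is to prove that $\Psi_N \to \Ex[g(\eta_\bullet)]$ in probability, i.e.\ that the conditional law of the martingale vector given the initial data converges to the law of $\eta_\bullet$. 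Granting this, because $\Psi_N$ is bounded by $\|g\|_\infty$ and converges in probability to the constant $\Ex[g(\eta_\bullet)]$, it converges in $L^1$; splitting $\Ex[h(V_N)\Psi_N] = \Ex[h(V_N)(\Psi_N - \Ex[g(\eta_\bullet)])] + \Ex[g(\eta_\bullet)]\,\Ex[h(V_N)]$, the first term tends to $0$ (a bounded factor times an $L^1$-null factor) and the second tends to $\Ex[g(\eta_\bullet)]\,\Ex[h(\zeta_\bullet)]$ by the marginal CLT for $V_N$, giving exactly the product form.

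The main obstacle is therefore this conditional martingale CLT, and it is precisely here that the independence of the limits is produced. The cross-variations recorded before Lemma~\ref{lem:joint-convergence-polynomials},
\[
	[\Phi^{(f_k;N)}, \Phi^{(f_l;N)}]_t = \int_0^t \bra{\mu_s^{(N)}, 2x(1-x) f_k'(x) f_l'(x)}\, ds,
\]
converge in probability to the \emph{deterministic} limit $\bra{\nu_c, 2x(1-x) f_k' f_l'}\,(s \wedge t)$ by Theorem~\ref{thm:LLN-processes}. This convergence persists after conditioning on $\cF_0$: convergence in probability to a constant is equivalent to $L^1$-convergence of the conditional probabilities $\Prob(\cdot \mid \cF_0)$ to the same constant. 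Since the Brownian motion is independent of $\cF_0$ and the limiting bracket is nonrandom, the conditional---equivalently, stable---version of Rebolledo's central limit theorem \cite{Rebolledo-1980} applies and yields $\Psi_N \to \Ex[g(\eta_\bullet)]$ with a limit whose law does not depend on $\cF_0$; the deterministic limiting quadratic variation is exactly the hypothesis that forces the Gaussian limit to live independently of the initial $\sigma$-field. I would phrase this last step in the language of stable convergence, which packages the asymptotic independence cleanly and makes the passage to product test functionals automatic.
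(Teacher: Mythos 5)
Your proposal is correct in substance but follows a genuinely different route from the paper. The paper never conditions on $\cF_0$: instead it uses the Dambis--Dubins--Schwarz time change (Theorem V.1.6 in \cite{Revuz-Yor-book}) to write $\Phi^{(f_k;N)}_t = B^{(N)}_{[\Phi^{(f_k;N)}]_t}$, notes that the DDS Brownian motion $B^{(N)}$ is independent of $\cF_0$, and proves an auxiliary lemma (Lemma~\ref{lem:time-changed}) showing that $\Phi^{(f_k;N)}_t - A^{(f_k;N)}_t \Pto 0$ in $C([0,T])$, where $A^{(f_k;N)}_t := B^{(N)}_{m_t}$ is the DDS motion run along the \emph{deterministic} limiting clock $m_t$. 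Since the $A$'s are exactly (not just asymptotically) independent of the $\cF_0$-measurable initial data, the joint convergence with independent limits is immediate for the pair (initial data, $A$'s), and the $A$'s are then swapped back for the $\Phi$'s. Your conditioning argument replaces this explicit coupling by a conditional martingale CLT: you reduce everything to $\Psi_N = \Ex[g(\Phi^{(N)})\mid\cF_0] \to \Ex[g(\eta_\bullet)]$ in probability, and your observation that convergence in probability of the brackets to a \emph{constant} passes to the conditional laws in $L^1$ is the right mechanism. What the paper's route buys is that it stays entirely within the unconditional Rebolledo theorem already invoked in Lemma~\ref{lem:joint-convergence-polynomials}, at the cost of the extra modulus-of-continuity lemma; what your route buys is that it needs no time-change approximation, at the cost of a ``stable/conditional'' version of Rebolledo's theorem that is not stated in \cite{Rebolledo-1980} and that you assert rather than prove.

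Two points in your argument deserve to be made explicit before it is complete. First, you must check that, conditionally on $\cF_0$, each $\Phi^{(f_k;N)}$ is still a continuous local martingale with the \emph{same} bracket processes; this holds because the system~\eqref{SDEs-J} has a unique strong solution driven by a Brownian motion independent of $\cF_0$, so the conditional law is the law of the solution started from the frozen initial point, but it is a hypothesis of the conditional CLT, not a formality. Second, the passage from $L^1$-convergence of the conditional deviation probabilities to $\Psi_N \to \Ex[g(\eta_\bullet)]$ in probability needs the standard subsequence extraction: along any subsequence choose a further one along which $\Ex\bigl[\bigl(\sup_t |[\Phi^{(f_k;N)},\Phi^{(f_l;N)}]_t - m^{k,l}_t|\bigr)\wedge 1 \,\big|\, \cF_0\bigr] \to 0$ almost surely, apply the unconditional Rebolledo theorem $\omega$-by-$\omega$ under the regular conditional probability, and conclude by boundedness of $g$. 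Note also that ``stable convergence'' in the usual sense presupposes a fixed probability space, whereas here the space (the $N$-dimensional Brownian motion and the size-$N$ ensemble) changes with $N$; your product test-function computation $\Ex[h(V_N)\Psi_N] \to \Ex[h(\zeta_\bullet)]\Ex[g(\eta_\bullet)]$ is the correct per-$N$ substitute and should be regarded as the actual statement being proved, with the stability language only as motivation. With these points filled in, your proof is a valid alternative to the paper's.
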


We need some preliminaries before proving this lemma. Let $M_t$ be a continuous martingale w.r.t.\ the filtration $\{\cF_t; 0 \le t < \infty\}$ satisfying $M_0 = 0$, a.s.,\ and $[M]_\infty = \infty$, a.s.. Let 
\[
	T_t = \inf \{s : [M]_s > t\}.
\] 
It is known that (see Theorem V.1.6 in \cite{Revuz-Yor-book})
\[
	B_t = M_{T_t}
\]
is a standard Brownian motion w.r.t.\ the filtration $\{\cF_{T_t}; 0 \le t < \infty\}$ and 
\[
	M_t = B_{[M]_t}.
\]
In particular, the Brownian motion $B_t$ is independent of $\cF_0$.
\begin{lemma}\label{lem:time-changed}
For each $N$, assume that $M_t^{(N)}$ is a continuous martingales with $M_t^{(N)} = 0,$ a.s.\ and $[M^{(N)}]_\infty = \infty,$ a.s.. Assume further that as $N \to \infty$,
\[
	[M^{(N)}]_t \Pto m_t,\quad \text{in} \quad C([0, T]),
\]
where $m_t \in C([0, T])$ is non-random.
Let $B^{(N)}_t$ be the standard Brownian motion associated with the martingale $M^{(N)}_t$ defined as above. Define a process $A^{(N)}_t$ as
\[
	A^{(N)}_t = B^{(N)}_{m_t}, \quad t \in [0, T].
\]
Then as $N \to \infty$,
\[
	M^{(N)}_t - A^{(N)}_t \Pto 0, \quad \text{in} \quad C([0, T]).
\]
\end{lemma}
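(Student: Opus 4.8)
The plan is to exploit the Dambis--Dubins--Schwarz time-change recalled just above and then to control the error by the uniform continuity of Brownian paths. Writing $M_t^{(N)} = B^{(N)}_{[M^{(N)}]_t}$ via that representation, the quantity to estimate becomes
\[
	\sup_{0 \le t \le T} \big| M_t^{(N)} - A_t^{(N)} \big| = \sup_{0 \le t \le T} \big| B^{(N)}_{[M^{(N)}]_t} - B^{(N)}_{m_t} \big|.
\]
Since $[M^{(N)}]_t$ is non-decreasing and continuous in $t$ and converges uniformly in probability to $m_t$, the limit $m_t$ is itself non-decreasing and continuous; in particular $m_t \le m_T$ for all $t \in [0,T]$, so I may work on the fixed compact time interval $[0, m_T + 1]$.

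The key steps are as follows. First, for $\delta \in (0,1)$ I would introduce the modulus of continuity $w_\delta^{(N)}$ of $B^{(N)}$ on $[0, m_T + 1]$. On the good event $G_\delta^{(N)} = \{\sup_{0 \le t \le T}|[M^{(N)}]_t - m_t| < \delta\}$, both time arguments $[M^{(N)}]_t$ and $m_t$ lie in $[0, m_T + 1]$ and differ by less than $\delta$, so the supremum above is bounded by $w_\delta^{(N)}$. Second --- and this is the decisive point --- because $B^{(N)}$ is a \emph{standard} Brownian motion for every $N$, the law of $w_\delta^{(N)}$ is independent of $N$ and coincides with the modulus of continuity of a single Brownian motion on $[0, m_T+1]$; by uniform continuity of Brownian paths this tends to $0$ almost surely as $\delta \downarrow 0$. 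Hence, given $\eta > 0$, I can fix $\delta$ making $\Prob(w_\delta^{(N)} > \eta)$ small uniformly in $N$.

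Combining the two estimates,
\[
	\Prob\Big( \sup_{0 \le t \le T} \big| M_t^{(N)} - A_t^{(N)} \big| > \eta \Big)
	\le \Prob\big( w_\delta^{(N)} > \eta \big) + \Prob\big( (G_\delta^{(N)})^c \big),
\]
where the first term is controlled uniformly in $N$ by the previous step and the second tends to $0$ as $N \to \infty$ by the assumed convergence $[M^{(N)}]_t \Pto m_t$ in $C([0,T])$. Letting $N \to \infty$ and then $\eta \downarrow 0$ yields the claim. The main obstacle is precisely obtaining the modulus-of-continuity bound \emph{uniformly} in $N$; this is exactly what forces the passage through the time-change to a standard Brownian motion, whose increments over a fixed compact interval have an $N$-free distribution. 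A minor but necessary point is keeping both time arguments inside the fixed interval $[0, m_T+1]$, which is guaranteed on $G_\delta^{(N)}$.
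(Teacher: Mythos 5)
Your proposal is correct and follows essentially the same route as the paper's proof: both use the Dambis--Dubins--Schwarz representation $M^{(N)}_t = B^{(N)}_{[M^{(N)}]_t}$, split on the event $\sup_{0\le t\le T}|[M^{(N)}]_t - m_t| < \delta$, bound the difference by the modulus of continuity of $B^{(N)}$ over a fixed compact time interval (whose law is $N$-free because $B^{(N)}$ is a standard Brownian motion), and conclude by letting $N \to \infty$ and then $\delta \downarrow 0$ using the almost sure uniform continuity of Brownian paths. The only differences are cosmetic (your explicit window $[0, m_T+1]$ versus the paper's $\sup_{0\le s\le m_T}\sup_{|s-s'|\le\delta}$, and your cleaner remark that the law of the modulus is independent of $N$, which the paper states with a slight abuse of notation).
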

\begin{proof}
For given $\varepsilon > 0$, we estimate the probability 
\[
	\Prob \left(\sup_{0 \le t \le T} |M^{(N)}_t - A^{(N)}_t| \ge \varepsilon\right)
\]
as follows. For any $\delta > 0$, 
\begin{align}
	\Prob\left(\sup_{0 \le t \le T} |M^{(N)}_t - A^{(N)}_t| \ge \varepsilon \right) &= 	\Prob\left(\sup_{0 \le t \le T} |B^{(N)}_{[M^{(N)}]_t} - B^{(N)}_{m_t}| \ge \varepsilon \right) \notag\\
	&\le \Prob\left(\sup_{0 \le t \le T} |B^{(N)}_{[M^{(N)}]_t} - B^{(N)}_{m_t}| \ge \varepsilon, \sup_{0 \le t \le T} |[M^{(N)}]_t - m_t| < \delta \right) \notag\\
	&\quad + \Prob\left( \sup_{0 \le t \le T} |[M^{(N)}]_t - m_t| \ge \delta\right)\notag\\
	&\le \Prob\left(\sup_{0 \le s \le m_T} \sup_{s' \ge 0, |s - s'| \le \delta } |B^{(N)}_{s'} - B^{(N)}_{s}| \ge \varepsilon \right) \notag\\
	&\quad + \Prob\left( \sup_{0 \le t \le T} |[M^{(N)}]_t - m_t| \ge \delta\right).
\end{align}
By letting $N \to \infty$, we obtain that 
\[
	\limsup_{N \to \infty} \Prob\left(\sup_{0 \le t \le T} |M^{(N)}_t - A^{(N)}_t| \ge \varepsilon \right) \le \Prob\left(\sup_{0 \le s \le m_T} \sup_{s' \ge 0, |s - s'| \le \delta } |B^{(N)}_{s'} - B^{(N)}_{s}| \ge \varepsilon \right).
\]
Note that the right hand side of the above equation does not depend on $N$. It remains to show that it converges to zero as $\delta \to 0+$. Now let $B_t$ be a given standard Brownian motion. Since with probability one, its path is continuous, it follows that as $\delta \to 0+$,
\[
	\sup_{0 \le s \le m_T} \sup_{s' \ge 0, |s - s'| \le \delta } |B_{s'} - B_{s}| \to 0, \text{a.s..} 
\]
Consequently, for any given $\varepsilon > 0$, as $\delta \to 0+$,
\[
	\Prob\left(\sup_{0 \le s \le m_T} \sup_{s' \ge 0, |s - s'| \le \delta } |B^{(N)}_{s'} - B^{(N)}_{s}| \ge \varepsilon \right) = \Prob\left(\sup_{0 \le s \le m_T} \sup_{s' \ge 0, |s - s'| \le \delta } |B_{s'} - B_{s}| \ge \varepsilon \right) \to 0,
\]
which completes the proof.
\end{proof}

\begin{proof}[Proof of Lemma~\rm\ref{lem:joint-convergence-with-initial}]. For each martingale $\Phi^{(f_k; N)}_t$, since $f'_k$ is a non-zero polynomial, it follows that with probability one, 
\[
	[\Phi^{(f_k; N)}]_\infty = \infty.
\]
Recall that 
\[
	[\Phi^{(f_k; N)}]_t \Pto \bra{\nu_c, 2x(1-x)(f_k'(x))^2} \times t, \quad \text{in}\quad C([0, T]). 
\]
Thus, $\Phi^{(f_k; N)}_t$ satisfies the assumptions in Lemma~\ref{lem:time-changed}.
Denote by $A^{(f_k; N)}_t$ the corresponding process introduced in Lemma~\ref{lem:time-changed}. Then as proved in Lemma~\ref{lem:time-changed}, 
\begin{equation}\label{Phi-A}
	\Phi^{(f_k; N)}_t - A^{(f_k; N)}_t \Pto 0, \quad \text{in} \quad C([0, T]).
\end{equation}
Consequently, the processes $\{A^{(f_k; N)}_t\}_{k=1}^n$ jointly converge in distribution to the same limits $\{\bra{\eta, f_k}\}_{k=1}^n$ as $\{\Phi^{(f_k; N)}_t\}_{k=1}^n$ do. In addition, the processes $\{A^{(f_k; N)}_t\}_{k=1}^n$ are independent of the initial data $\{\sqrt{N}(\bra{\mu_0^{(N)}, f_k} - \bra{\nu_c, f_k} ) \}_{k=1}^n$. Thus, the following joint convergence holds,
\[
	\Big\{	\{\sqrt{N}(\bra{\mu_0^{(N)}, f_k} - \bra{\nu_c, f_k} ) \}_{k=1}^n , \{ A^{(f_k; N)}_t\}_{k=1}^n  \Big\} \dto \Big \{\{\bra{\zeta, f_k}\}_{k=1}^n, \{\bra{\eta, f_k}\}_{k=1}^n \Big\}.
\]
Finally, we can replace $A$'s by $\Phi$'s because of \eqref{Phi-A}. The proof is complete.
\end{proof}

Equation~\eqref{Ito-sk}, together with the above result on the joint convergence of initial data and martingale parts implies the following result.
\begin{theorem}\label{thm:moment-processes}
There are Gaussian processes $\{\xi_n\}_{n \ge 1}$ such that for each $n = 1, 2, \dots,$ as $N \to \infty$, 
\[
	\tilde S_n^{(N)} \dto \xi_n,
\]
as $C([0, T])$-valued random elements.
The joint convergence also holds. Consequently, there is a family of Gaussian processes $\bra{\xi, f}$ indexed by polynomials $f$ constructed from $\zeta$'s and $\eta$'s such that for any polynomial $f$
\[
	\sqrt N \left( \bra{\mu_t^{(N)}, f} - \bra{\nu_c, f}\right) \dto \bra{\xi, f}.
\]
\end{theorem}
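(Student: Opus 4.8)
The plan is to prove the joint convergence $(\tilde S_1^{(N)}, \dots, \tilde S_M^{(N)}) \dto (\xi_1, \dots, \xi_M)$ in $C([0,T])^M$ by induction on $M$, using the explicit solution formula of Lemma~\ref{lem:ODE} to turn the integral equation~\eqref{Ito-sk} into a continuous functional of data whose convergence is already known. For fixed $k$, I would read off from~\eqref{Ito-sk} that $\tilde S_k^{(N)}$ solves an equation of the form in Lemma~\ref{lem:ODE} with initial value $\iota_k^{(N)} := \sqrt N(\bra{\mu_0^{(N)}, x^k} - u_k)$, decay rate $\gamma_k := k(2c+a+b+k+1)$, and forcing term $F_k^{(N)}$ consisting of the martingale $\sqrt N M_k^{(N)} = \Phi^{(x^k;N)}$, the linear lower-order term $k(a+k)\int_0^t \tilde S_{k-1}^{(N)}(s)\,ds$, the two quadratic sums, and the explicit $O(N^{-1/2})$ remainder. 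Lemma~\ref{lem:ODE} then writes $\tilde S_k^{(N)} = \Theta_{\gamma_k}(\iota_k^{(N)}, F_k^{(N)})$, where $\Theta_\gamma(x_0, F)(t) = x_0 e^{-\gamma t} + F(t) - \gamma e^{-\gamma t}\int_0^t e^{\gamma s} F(s)\,ds$ is an affine map that is continuous from $\R \times C([0,T])$ to $C([0,T])$.

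Next I would linearize the quadratic forcing. Writing $S_i^{(N)} = u_i + N^{-1/2}\tilde S_i^{(N)}$ gives
\[
	\sqrt N\big(S_i^{(N)} S_{k-1-i}^{(N)} - u_i u_{k-1-i}\big) = u_i \tilde S_{k-1-i}^{(N)} + u_{k-1-i}\tilde S_i^{(N)} + \tfrac{1}{\sqrt N}\tilde S_i^{(N)}\tilde S_{k-1-i}^{(N)},
\]
and analogously for the second sum. The point is that every index occurring is strictly smaller than $k$. By the inductive hypothesis the lower-order processes $\{\tilde S_l^{(N)}\}_{l<k}$ converge in distribution in $C([0,T])$ and are therefore tight, so $\sup_{[0,T]}|\tilde S_l^{(N)}| = O_\Prob(1)$ and each remainder $N^{-1/2}\tilde S_i^{(N)}\tilde S_{k-1-i}^{(N)} \Pto 0$ uniformly on $[0,T]$. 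The final term $\frac{c}{\sqrt N}\int_0^t(k^2 S_{k-1}^{(N)} - k(k+1)S_k^{(N)})\,ds$ is deterministically $O(N^{-1/2})$ because the moments $S_l^{(N)}$ are bounded (the measures $\mu_t^{(N)}$ live on $[0,1]$), hence also vanishes in $C([0,T])$. After this linearization $F_k^{(N)}$ equals, up to an $o_\Prob(1)$ error in $C([0,T])$, a fixed continuous functional of $\Phi^{(x^k;N)}$ and $\{\tilde S_l^{(N)}\}_{l<k}$.

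For the induction, the base case uses $\tilde S_0^{(N)} \equiv 0$, so that for $k=1$ the forcing reduces to $\Phi^{(x;N)}$ plus negligible errors. Applying Lemma~\ref{lem:joint-convergence-with-initial} with $f_k = x^k$ for $k=1,\dots,M$ simultaneously yields the joint convergence $(\{\iota_k^{(N)}\}_{k=1}^M, \{\Phi^{(x^k;N)}\}_{k=1}^M) \dto (\{\zeta_k\}_{k=1}^M, \{\eta_k\}_{k=1}^M)$, with the $\eta$'s independent of the $\zeta$'s. In the inductive step I would carry along the enlarged joint convergence
\[
	\big(\{\iota_l^{(N)}\}_{l\le k}, \{\Phi^{(x^l;N)}\}_{l\le k}, \{\tilde S_l^{(N)}\}_{l<k}\big) \dto \big(\{\zeta_l\}_{l\le k}, \{\eta_l\}_{l\le k}, \{\xi_l\}_{l<k}\big),
\]
and apply the continuous map sending $(\iota_k^{(N)}, \Phi^{(x^k;N)}, \{\tilde S_l^{(N)}\}_{l<k})$ to $\Theta_{\gamma_k}(\iota_k^{(N)}, F_k^{(N)})$, i.e.\ $\Theta_{\gamma_k}$ composed with the continuous forcing functional. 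The continuous mapping theorem, together with Slutsky's lemma to discard the $o_\Prob(1)$ error, then gives $\tilde S_k^{(N)} \dto \xi_k$ jointly with all earlier objects, where $\xi_k$ solves the corresponding limiting linear equation driven by $\zeta_k$, $\eta_k$ and $\{\xi_l\}_{l<k}$. Since all the maps involved are affine in the jointly Gaussian inputs $\zeta$'s, $\eta$'s and (inductively) $\xi$'s, each $\xi_k$ is Gaussian and the whole family is jointly Gaussian, with $\eta$'s independent of $\zeta$'s inherited from Lemma~\ref{lem:joint-convergence-with-initial}. The statement for a general polynomial $f = \sum_k a_k x^k$ finally follows by linearity with $\bra{\xi, f} := \sum_k a_k \xi_k$, since $\sqrt N(\bra{\mu_t^{(N)}, f} - \bra{\nu_c, f}) = \sum_k a_k \tilde S_k^{(N)} \dto \sum_k a_k \xi_k$.

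The hard part will be the bookkeeping that keeps the \emph{joint} convergence intact across the induction: at each stage one must track the initial-value terms, the martingales and all previously constructed solutions in a single product space so that the continuous mapping theorem applies, and one must verify that the linearization remainders and the explicit $N^{-1/2}$ error are negligible \emph{uniformly} on $[0,T]$, not merely for each fixed $t$.
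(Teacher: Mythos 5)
Your proposal is correct and follows essentially the same route as the paper, which proves Theorem~\ref{thm:moment-processes} by induction on $k$ using the ODE solution formula of Lemma~\ref{lem:ODE}, the joint convergence of initial data and martingale parts from Lemma~\ref{lem:joint-convergence-with-initial}, and the linearization of the quadratic terms --- your expansion $\sqrt N\big(S_i^{(N)} S_{k-1-i}^{(N)} - u_i u_{k-1-i}\big) = u_i \tilde S_{k-1-i}^{(N)} + u_{k-1-i}\tilde S_i^{(N)} + N^{-1/2}\tilde S_i^{(N)}\tilde S_{k-1-i}^{(N)}$ is exactly the content of Lemma~\ref{lem:joint-processes}, which the paper invokes (deferring details to \cite{NTT-2023}). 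The bookkeeping you flag at the end, carrying initial values, martingales and previously constructed solutions in one product space so the continuous mapping theorem applies, is precisely the part the paper omits, and your treatment of it is sound.
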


Theorem~\eqref{thm:moment-processes} can be proved by using induction with the help of Lemma~\ref{lem:joint-processes} below. We skip its proof and the proof of Lemma~\ref{lem:joint-processes} because arguments are almost the same as those used in \cite{NTT-2023}.
\begin{lemma}\label{lem:joint-processes}
Assume that $\tilde S_{n_1}^{(N)}$ and $\tilde S_{n_2}^{(N)}$ jointly converge in distribution to $\xi_{n_1}$ and $\xi_{n_2}$ as $C([0, T])$-valued random elements. Then 
\[
	\sqrt{N}( S_{n_1}^{(N)}  S_{n_2}^{(N)} - u_{n_1} u_{n_2}) \dto u_{n_1} \xi_{n_2} + u_{n_2} \xi_{n_1}.
\]
\end{lemma}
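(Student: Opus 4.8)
The plan is to reduce the statement to the hypothesized joint convergence of $\tilde S_{n_1}^{(N)}$ and $\tilde S_{n_2}^{(N)}$ combined with the law of large numbers of Theorem~\ref{thm:LLN-processes}, by means of an elementary algebraic decomposition followed by a Slutsky-type argument carried out at the level of $C([0,T])$-valued random elements. First I would record the exact pointwise identity
\[
	\sqrt N\big(S_{n_1}^{(N)} S_{n_2}^{(N)} - u_{n_1} u_{n_2}\big) = \tilde S_{n_1}^{(N)}\, S_{n_2}^{(N)} + u_{n_1}\, \tilde S_{n_2}^{(N)},
\]
which uses only $\sqrt N(S_{n_i}^{(N)} - u_{n_i}) = \tilde S_{n_i}^{(N)}$ and holds as an identity in $C([0,T])$.

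Next I would handle the first summand on the right. By Theorem~\ref{thm:LLN-processes}, $S_{n_2}^{(N)} \Pto u_{n_2}$ in $C([0,T])$; equivalently, since $\tilde S_{n_2}^{(N)}$ is tight (being weakly convergent), $S_{n_2}^{(N)} - u_{n_2} = \tilde S_{n_2}^{(N)}/\sqrt N \Pto 0$ in sup norm. Splitting
\[
	\tilde S_{n_1}^{(N)}\, S_{n_2}^{(N)} = u_{n_2}\, \tilde S_{n_1}^{(N)} + \tilde S_{n_1}^{(N)}\big(S_{n_2}^{(N)} - u_{n_2}\big),
\]
the last term tends to $0$ in probability in $C([0,T])$, because $\sup_{[0,T]}|\tilde S_{n_1}^{(N)}|$ is bounded in probability while $\sup_{[0,T]}|S_{n_2}^{(N)} - u_{n_2}| \Pto 0$, and multiplication is continuous on the Banach algebra $C([0,T])$.

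Finally I would identify the limit of $u_{n_2}\,\tilde S_{n_1}^{(N)} + u_{n_1}\,\tilde S_{n_2}^{(N)}$, which is a fixed continuous linear map applied to the pair $(\tilde S_{n_1}^{(N)}, \tilde S_{n_2}^{(N)})$. By hypothesis this pair converges jointly in distribution to $(\xi_{n_1}, \xi_{n_2})$, so the continuous mapping theorem yields convergence to $u_{n_2}\xi_{n_1} + u_{n_1}\xi_{n_2}$; absorbing the $o_P(1)$ remainder by Slutsky's theorem gives the claim. The only genuine care required—and where I expect the main (if mild) obstacle to lie—is that every convergence happens in $C([0,T])$ rather than in $\R$: one must check that multiplication is jointly continuous in the supremum norm so that a tight factor times an $o_P(1)$ factor is negligible, and that appending $S_{n_2}^{(N)}$ to the jointly convergent pair preserves joint convergence, which is legitimate precisely because its limit $u_{n_2}$ is deterministic.
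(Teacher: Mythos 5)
Your proof is correct and complete: the identity $\sqrt N\big(S_{n_1}^{(N)}S_{n_2}^{(N)}-u_{n_1}u_{n_2}\big)=\tilde S_{n_1}^{(N)}S_{n_2}^{(N)}+u_{n_1}\tilde S_{n_2}^{(N)}$, tightness of $\tilde S_{n_1}^{(N)}$ from its assumed weak convergence in the Polish space $C([0,T])$, continuity of pointwise multiplication in the supremum norm, the continuous mapping theorem for the linear map $(f,g)\mapsto u_{n_2}f+u_{n_1}g$, and the metric-space form of Slutsky's theorem are all legitimate. The paper itself omits the proof of Lemma~\ref{lem:joint-processes}, deferring to \cite{NTT-2023}, and your decomposition-plus-Slutsky argument is precisely the standard one intended there, so there is nothing to add.
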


In fact,  the family $\{\sqrt N ( \bra{\mu_t^{(N)}, f} - \bra{\nu_c, f})\}$, martingale parts $\{\Phi^{(f; N)}\}$ and initial data $\{\sqrt N ( \bra{\mu_0^{(N)}, f} - \bra{\nu_c, f} )\}$ jointly converge in distribution to $\xi$'s, $\eta$'s and $\zeta$'s. Let $P$ be a polynomial and $p = P'$. For convenience, we rewrite the formula~\eqref{Ito-f} with $P = f$ as follows
\begin{align}
	d \bra{\mu_t^{(N)}, P} &= \frac{1}{\sqrt N} d \Phi^{(P; N)}(t)  +  \bra{\mu_t^{(N)}, (a + 1) p(x) - (a + b + 2) x p(x) + x(1-x) p'(x)} dt \notag\\
	&\quad + c \iint \frac{x(1-x) p(x) - y(1-y) p(y)}{x - y} d\mu_t^{(N)}(x) d\mu_t^{(N)}(y) dt  \notag\\
	&\quad - \frac cN \bra{\mu_t^{(N)}, \{x(1-x)p(x)\}'} dt . \label{Ito-P}
\end{align}
Since $p$ is a polynomial, it is clear that the integrand of the above double integral can be written as a linear combination of $x^k y^l$, that is,
\[
	\frac{x(1-x) p(x) - y(1-y) p(y)}{x - y} = \sum_{\text{finite}} c_{k,l} x^k y^l.
\]
Thus, that double integral is a linear combination of $\bra{\mu_t^{(N)}, x^k} \bra{\mu_t^{(N)}, x^l} $. Consequently, its limit as $N \to \infty$ exists 
\begin{align*}
&	\iint \frac{x(1-x) p(x) - y(1-y) p(y)}{x - y} d\mu_t^{(N)}(x) d\mu_t^{(N)}(y) \\
&\qquad  \Pto \iint \frac{x(1-x) p(x) - y(1-y) p(y)}{x - y} d\nu_c(x) d\nu_c(y), \quad \text{in $C([0, T])$}.
\end{align*}
Now write equation~\eqref{Ito-P} in the integral form and then let $N \to \infty$, we obtain that
\begin{align*}
	\bra{\nu_c, P}' (=0) &= \bra{\nu_c, (a + 1) p (x)- (a + b + 2) x p (x) + x(1-x) p'(x)} \\
	&\quad + c \iint \frac{x(1-x) p(x) - y(1-y) p(y)}{x - y} d\nu_c(x) d\nu_c(y) .
\end{align*}
Using this identity, again we write the equation~\eqref{Ito-P} in the integral form, subtract each term by the corresponding limit when $N \to \infty$, multiply it by $\sqrt N$ and then let $N \to \infty$, we arrive at the following relation
\begin{equation}\label{xi-zeta-eta}
	\bra{\xi, P}(t) = \bra{\zeta, P} + \bra{\eta, P}(t) + \int_0^t \bralr{\xi, \cL(p)}(s) ds,
\end{equation}
where $\cL$ is a transformation which maps a polynomial $p(x)$ to a polynomial
\begin{align}
	\cL(p)(x) &:= 2c \int \frac{x(1-x)p(x) - y(1-y)p(y)}{x-y} d\nu_c(y) \notag\\
	&\quad + (a+1)p(x) - (a+b+2)xp(x)+x(1-x)p'(x). \label{L-transfromation0}
\end{align}
Here we have used Lemma~\ref{lem:joint-processes} to deal with the double integral term.

It follows from the covariance formula~\eqref{covariance-fg} that when we take primitives $\{P_n\}$ of orthogonal polynomials $\{p_n\}$ with respect to the measure $2x(1-x)\nu_c(x)dx$, the limiting Gaussian processes $\{\bra{\eta, P_n}\}$ are independent.

 In Sect.~\ref{sect:Duals}, the sequence of orthogonal polynomials $\{p_n\}$ (w.r.t.\ $2x(1-x)\nu_c(x)dx$) is taken by using the three term recurrence relation~\eqref{orthogonal-polynomials-pn}.
Let $P_n$ be a primitive of $p_n$. Then Proposition~\ref{lem:L-operator} shows that 
\[
	\cL(p_n) =-\gamma_n  P_n + const, \quad \gamma_n = (n+1)(n+2c+a+b+2).
\]
Thus the relation~\eqref{xi-zeta-eta} becomes
\[
	\bra{\xi, P_n}(t) = \bra{\zeta, P_n}  +  \bra{\eta, P_n}(t) - \gamma_{n} \int_0^t \bralr{\xi, P_n}(s) ds.
\]
Solving this ODE with respect to $\int_0^t \bralr{\xi, P_n}(s) ds$ (see Lemma~\ref{lem:ODE}), we obtain that
\begin{align}
	\bra{\xi, P_n}(t) &= \bra{\zeta, P_n} e^{ -\gamma_{n}t} + \bra{\eta, P_n}(t)  - \gamma_{n}e^{ -\gamma_{n}t} \int_0^t e^{\gamma_n s} \bra{\eta, P_n}(s) ds	\notag\\
	&= \bra{\zeta, P_n} e^{ -\gamma_{n}t}+ e^{-\gamma_n t} \int_0^t e^{\gamma_n s} d\bra{\eta, P_n}(s).\label{xiPn}
\end{align}
Here the last integral is a stochastic integral with respect to the Brownian motion $\bra{\eta, P_n}$. We arrive at the main result in this section.

\begin{theorem}\label{thm:CLT-orthogonal}
The following hold.
\begin{itemize}
\item[\rm(i)] For each $n$, the limiting Gaussian process $\bra{\xi, P_n}$ has the same distribution with the following process
\[
	\zeta e^{-\gamma_n t} +  \alpha_n e^{-\gamma_{n}t} \int_0^t e^{\gamma_{n}s} dw(s), \quad (\alpha_n^2 = \bra{\nu_c, 2x(1-x)p_n(x)^2}),
\]
where $w$ is a standard Brownian motion in $\R$ independent of a Gaussian random variable $\zeta \sim \Normal (0, \frac{\alpha_n^2}{2 \gamma_n})$.

\item[\rm(ii)] The limiting Gaussian processes $\{\bra{\xi, P_n}\}_{n \ge 0}$ are independent. 
 In other words, $\big\{	\sqrt N \big(\bra{\mu_t^{(N)}, P_n} - \bra{\nu_c, P_n}	\big)	\big\}_{n \ge 0}$ jointly converge to independent Gaussian processes.
 \end{itemize}
\end{theorem}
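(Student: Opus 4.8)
The plan is to read off both assertions from the explicit representation~\eqref{xiPn}, using the covariance structure~\eqref{covariance-fg} together with the stationarity of the underlying process. First I would identify the driving noise: taking $f=g=P_n$ in~\eqref{covariance-fg} gives $\Ex[\bra{\eta,P_n}(s)\bra{\eta,P_n}(t)] = \bra{\nu_c,2x(1-x)p_n(x)^2}\,(s\wedge t) = \alpha_n^2\,(s\wedge t)$, so $\bra{\eta,P_n}(t)=\alpha_n w(t)$ for a standard Brownian motion $w$. Substituting this into~\eqref{xiPn} rewrites $\bra{\xi,P_n}(t)$ as $\bra{\zeta,P_n}\,e^{-\gamma_n t} + \alpha_n e^{-\gamma_n t}\int_0^t e^{\gamma_n s}\,dw(s)$, which is exactly the form claimed in (i) except that the variance of $\bra{\zeta,P_n}$ still has to be pinned down; the independence of $w$ and $\bra{\zeta,P_n}$ is inherited from the stated independence of the $\eta$'s and $\zeta$'s.

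For the variance I would invoke stationarity. Since $\mu_t^{(N)}$ is a stationary process (its initial law is the invariant beta Jacobi ensemble), $\bra{\mu_t^{(N)},P_n}$ has a distribution that does not depend on $t$, and passing to the limit shows $\bra{\xi,P_n}(t)\stackrel{d}{=}\bra{\xi,P_n}(0)=\bra{\zeta,P_n}$ for every $t$. Using the independence of $\bra{\zeta,P_n}$ and $w$ together with the It\^o isometry, the variance of the right-hand side of~\eqref{xiPn} equals $\Var(\bra{\zeta,P_n})e^{-2\gamma_n t}+\frac{\alpha_n^2}{2\gamma_n}(1-e^{-2\gamma_n t})$. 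Stationarity forces this to be constant in $t$, and since $\gamma_n>0$ this is possible only if $\Var(\bra{\zeta,P_n})=\alpha_n^2/(2\gamma_n)$, completing (i).

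For (ii) everything in sight is jointly Gaussian (by Theorem~\ref{thm:moment-processes} together with the joint convergence of initial data and martingale parts), so independence of the processes $\{\bra{\xi,P_n}\}$ reduces to the vanishing of all cross-covariances. From~\eqref{xiPn}, for $n\neq m$ and any $s,t$, the $\zeta$--$\eta$ mixed terms drop out by independence of $\zeta$'s and $\eta$'s, and the stochastic-integral terms are uncorrelated because $\{\bra{\eta,P_n}\}$ are independent (as already noted from~\eqref{covariance-fg} and the orthogonality of $\{p_n\}$ in $L^2(2x(1-x)\nu_c(x)dx)$); what survives is $e^{-\gamma_n s-\gamma_m t}\,\Cov(\bra{\zeta,P_n},\bra{\zeta,P_m})$. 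It therefore suffices to show $\Cov(\bra{\zeta,P_n},\bra{\zeta,P_m})=0$. Taking $s=t$ and using that stationarity makes $\Cov(\bra{\xi,P_n}(t),\bra{\xi,P_m}(t))$ independent of $t$, while the prefactor $e^{-(\gamma_n+\gamma_m)t}\to0$, forces $\Cov(\bra{\zeta,P_n},\bra{\zeta,P_m})=0$. Hence all two-time cross-covariances vanish, the processes $\{\bra{\xi,P_n}\}_{n\ge0}$ are independent, and the final assertion about $\sqrt N(\bra{\mu_t^{(N)},P_n}-\bra{\nu_c,P_n})$ follows from Theorem~\ref{thm:moment-processes}.

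I expect the main obstacle to be the rigorous use of stationarity at the level of the limit process: one must justify that stationarity of $\mu_t^{(N)}$ is preserved under the distributional convergence $\tilde S_n^{(N)}\dto\xi_n$, so that $\Var(\bra{\xi,P_n}(t))$ and $\Cov(\bra{\xi,P_n}(t),\bra{\xi,P_m}(t))$ are genuinely $t$-independent. This is where the care lies; once stationarity of the limit is established, the exponential-decay argument pins down both the variance in (i) and the vanishing covariances in (ii) with no further computation.
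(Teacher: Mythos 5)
Your proposal is correct and follows essentially the same route as the paper: identify $\bra{\eta,P_n}=\alpha_n w$ from the covariance formula~\eqref{covariance-fg}, read the variance of $\bra{\zeta,P_n}$ off the representation~\eqref{xiPn} by forcing the variance to be $t$-independent via stationarity, and kill the cross-covariances for $n\neq m$ by the same exponential-decay-plus-stationarity argument. The only difference is cosmetic --- you track two-time covariances $e^{-\gamma_n s-\gamma_m t}\Cov(\bra{\zeta,P_n},\bra{\zeta,P_m})$ where the paper works at equal times, and you explicitly flag the (routine) point that stationarity passes to the limit process.
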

\begin{remark}
Let $\nu_c^*(x)dx = (Z_c^*)^{-1}x(1-x)\nu_c(x)dx$ be a probability measure, where $Z_c^*= \bra{\nu_c, x(1-x)}$ is the normalizing constant. Then $\{p_n\}$ are orthogonal with respect to $\nu_c^*$. Moreover, with notations in Sect.~\ref{sect:Duals}, $\alpha_n^2$ has the following explicit expression
\begin{align*}
	\alpha_n^2 &= \bra{\nu_c, 2x(1-x)p_n(x)^2} = {2}{ \bra{\nu_c, x(1-x)}} \bra{\nu_c^*, p_n(x)^2} \\
	&= 2 \hat \lambda_0(1 - \hat \lambda_0 - \mu_1) \prod_{i=1}^n \lambda^*_{i} \mu^*_{i-1}.
\end{align*}
\end{remark}

\begin{proof}
(i) Recall that the covariance formula~\eqref{covariance-fg} implies that $\bra{\eta, P_n} / \alpha_n$ is a standard Brownian motion. Thus, it suffices to identify the variance of $\bra{\zeta, P_n}$. We first calculate the variance of $\bra{\xi, P_n}(t)$ from the formula~\eqref{xiPn} as
\begin{align*}
	\Ex[(\bra{\xi, P_n}(t))^2] &= \Ex[\bra{\zeta, P_n}^2] e^{-2\gamma_n t} + \alpha_n^2 e^{-2\gamma_{n}t} \int_0^t e^{2 \gamma_{n}s} ds \\
	&=  \Ex[\bra{\zeta, P_n}^2] e^{-2\gamma_n t}  + \frac{\alpha_n^2}{2\gamma_n}- \frac{\alpha_n^2}{2\gamma_n} e^{-2\gamma_n t}.
\end{align*}
In addition the stationary property implies that the variance of $\bra{\xi, P_n}(t)$ does not depend on $t$. It then follows that
\[
	\Ex[\bra{\zeta, P_n}^2] = \frac{\alpha_n^2}{2\gamma_n}.
\]

(ii) We show that $\Cov(\bra{\zeta, P_n}, \bra{\zeta, P_m}) = 0$, for $n \neq m$. Indeed, for $n \neq m$, it follows from the relation~\eqref{xiPn} for $\bra{\xi, P_n}$ and $\bra{\xi, P_m}$ that 
\[
	\Cov(\bra{\xi, P_n}(t), \bra{\xi, P_m}(t)) = \Cov(\bra{\zeta, P_n}, \bra{\zeta, P_m})e^{-(\gamma_n + \gamma_m) t}.
\]
Here we have used the fact that $\bra{\eta, P_m}$ and $\bra{\eta, P_n}$ are independent Gaussian processes independent of $\{\bra{\zeta, P_m}, \bra{\zeta, P_n}\}$. Again, the stationary property implies that 
\[
	\Cov(\bra{\zeta, P_n}, \bra{\zeta, P_m}) = \Cov(\bra{\xi, P_n}(t), \bra{\xi, P_m}(t)) \equiv 0.
\] 
The covariances are zero for $n \neq m$ implies that $\{\bra{\zeta, P_n}\}$ are independent because they are jointly Gaussian random variables. The desired result follows immediately, The proof is complete.
\end{proof}

\section{Associated Jacobi polynomials}\label{sect:Duals}
\subsection{Orthogonal polynomials}
Let us introduce some concepts and results without proofs on orthogonal polynomials. We refer readers to  \cite{Deift-book-1999} or \cite{Simon-book-2011} for proofs and more explanations.. Let $\mu$ be a probability measure on the real line $\R$ having all finite moments, that is, 
\[
	\int x^n d\mu(x) < \infty, \quad \text{for all $n = 1,2,\dots.$}
\]
Assume for instance that $\mu$ is supported on an infinite set. Then the monomials $\{1, x, x^2, \dots\}$ are linearly independent in $L^2(\R, \mu)$. Let 
\[
	p_n = x^n + \text{lower order terms},\quad (n = 0,1,2,\dots)
\]
be the sequence of monic polynomials resulting from the Gram--Schmidt orthogonalization process of the monomials with respect to the usual inner product in $L^2(\R, \mu)$. Then $\{p_n\}_{n \ge 0}$ satisfy the three term recurrence relation
\begin{equation}\label{three-term}
	\begin{cases}
		p_0 = 1, \quad p_1 = x - a_1, \\
		p_{n+1} = xp_n - a_{n+1}p_n - b_{n}^2 p_{n-1}, \quad n \ge 1,
	\end{cases}
\end{equation}
for real numbers $\{a_n\}_{n \ge 1}$ and positive numbers $\{b_n\}_{n \ge 1}$. The (infinite) tridiagonal matrix $J$ formed from the two sequences  $\{a_n\}_{n \ge 1}$ and $\{b_n\}_{n \ge 1}$
\begin{equation}\label{Jacobi-matrix}
	J = \begin{pmatrix}
		a_1		&b_1\\
		b_1		&a_2		&b_2\\
		&\ddots	&\ddots	&\ddots
	\end{pmatrix},
\end{equation}
is called the Jacobi matrix of $\mu$. A symmetric tridiagonal matrix itself is called a Jacobi matrix.

Conversely, given two sequences  $\{a_n\}_{n \ge 1}$ of real numbers and $\{b_n\}_{n \ge 1} $ of positive numbers, or given an (infinite) Jacobi matrix $J$ of the form~\eqref{Jacobi-matrix}, there is a probability measure $\mu$ satisfying 
\begin{equation}\label{spectral-measure}
	\bra{\mu, x^n} = J^{n}(1,1), \quad n = 0,1,2,\dots.
\end{equation}
Then the sequence of polynomials $\{p_n\}_{n\ge 0}$ defined by the three term recurrence relation~\eqref{three-term} are orthogonal with respect to $\mu$  
\[
	\int p_n (x) p_m (x) d\mu(x) = \begin{cases}
		b_1^2 \cdots b_n^2, &n = m,\\
		0, &n \neq m.
	\end{cases}
\]
Consequently, the polynomials $\tilde p_n = p_n/(b_1 \cdots b_n)$ are orthonormal in $L^2(\R, \mu)$. Note that the sequence $\{\tilde p_n\}$ satisfies the following relation
\begin{equation}\label{three-term-on}
	\begin{cases}
		\tilde p_0 = 1, \quad b_{1} \tilde p_1 = x - a_1, \\
		b_{b+1} \tilde p_{n+1} = x\tilde p_n - a_{n+1}\tilde p_n - b_{n} \tilde p_{n-1}, \quad n \ge 1.
	\end{cases}
\end{equation}
A probability measure $\mu$ satisfying the moment condition~\eqref{spectral-measure} exists but may not be unique in general. In case of uniqueness, it is called the spectral measure of the Jacobi matrix $J$. A useful sufficient condition for the uniqueness is given by (see Corollary 3.8.9 in \cite{Simon-book-2011})
\[
	\sum_{n=1}^\infty \frac1{b_n} = \infty.
\]
In particular, the uniqueness holds when the sequence $\{b_n\}_{n \ge 1}$ is bounded.

The finite case is much more straightforward. Let 
\[
	J = \begin{pmatrix}
		a_1		&b_1\\
		b_1		&a_2		&b_2\\
		&\ddots	&\ddots	&\ddots\\
		&&b_{N-1}	&a_N
		
	\end{pmatrix}, \quad (a_i \in \R, b_i > 0),
\]
be a Jacobi matrix of size $N$. Then $J$ has $N$ distinct eigenvalues $\lambda_1,\dots, \lambda_N$. Let $v_1, \dots, v_N$ be the corresponding normalized eigenvectors. In this case, the spectral measure $\mu$ of $J$ which satisfies the moment relation~\eqref{spectral-measure} has the following form 
\[
	\mu = \sum_{i=1}^N |v_i(1)|^2 \delta_{\lambda_i}.
\]
The three term relation is defined up to $N$, 
\[
	\begin{cases}
		p_0 = 1, \quad p_1 = x - a_1, \\
		p_{n+1} = xp_n - a_{n+1}p_n - b_{n}^2 p_{n-1}, \quad n = 1,\dots, N-1,
	\end{cases}
\]
with $\{p_n\}_{n=0}^{N-1}$ orthogonal polynomials and $p_N(x) = \prod_{i=1}^N (x - \lambda_i) = 0$ in $L^2(\R, \mu)$.

\subsection{Beta Jacobi ensembles and duals of Jacobi polynomials}
In this subsection, we consider the limiting behavior of beta Jacobi ensembles in a low temperature regime, that is, the regime where $N$ is fixed, $(\beta, a, b)  = (2 \kappa, A \kappa, B\kappa)$ with $\kappa \to \infty$, for fixed $A, B > 0$. Recall that $L_{N, \beta}$ and $\Sp_{N, \beta}$ denote the empirical distribution and the spectral measure of the random tridiagonal matrix $J_{N, \beta}$.

In this regime, it follows from the asymptotic behavior of beta distributions that 
\[
J_{N, \beta} \to T_N = 
\begin{pmatrix}
		\sqrt{c_1}	\\
		\sqrt{d_1}	&\sqrt{c_2}		\\
		&\ddots	&\ddots \\
		&& \sqrt{d_{N - 1}}	& \sqrt{c_N}
	\end{pmatrix}
	 \begin{pmatrix}
		\sqrt{c_1} 	&\sqrt{d_1}	\\
			&\sqrt{c_2}		&\sqrt{d_2}		\\
		&&\ddots	&\ddots \\
		&&& \sqrt{c_N}
	\end{pmatrix}, 	
\]
where 
\begin{align*}
	c_1 &= \frac{1 - N - A}{2 - 2N - A - B},	\\
	c_n &= \frac{n - N - A}{2n - 2N - A - B} \frac{n - N - A - B}{2n - 2N - A - B - 1}, \quad n \ge 2,\\
	d_n &=  \frac{n - N}{2n - 2N - A - B + 1} \frac{n - N - B}{2n - 2N - A - B}, \quad n \ge 1.
\end{align*}
Namely, each entry of $J_{N, \beta}$ converges in probability and in $L^q$, for any $q \in [1, \infty)$, to the corresponding entry of $T_N$. Since the size $N$ is fixed, it follows that for any $k = 1, 2, \dots,$
\begin{equation}\label{convergence-moments}
	\frac{1}{N} \sum_{i=1}^N (J_{N, \beta})^k(i,i) \to \frac1N \sum_{i=1}^N (T_N)^k(i, i), \quad (J_{N, \beta})^k(1,1) \to (T_N)^k(1, 1), 
\end{equation}
in probability and in $L^q$ for any $q \in [1, \infty)$. Recall from the identity~\eqref{same-mean} that the two random variables in the above expression have the same mean value.
Consequently, the two limits are identical
\[
	 \frac1N \sum_{i=1}^N (T_N)^k(i, i) = (T_N)^k(1, 1), \quad \text{for all $k=1, 2, \dots$.}
\]
Let $z_1 < z_2 < \cdots < z_N$ be the eigenvalues of $T_N$. The above relation implies that the spectral measure $\mu_N$ of $T_N$ coincides with the empirical measure $N^{-1} \sum_{i=1}^N \delta_{z_i}$. Here the spectral measure $\mu_N$ is the unique probability measure satisfying 
\[
	\bra{\mu_N, x^k} = (T_N)^k(1,1), \quad k = 0,1,2,\dots.
\]
Then the convergence of moments~\eqref{convergence-moments} implies that both $L_{N, \beta}$ and $\Sp_{N, \beta}$ converge weakly to $\mu_N$, in probability.

Beta Jacobi ensembles at low temperature can also be studied by analyzing the joint density \cite{Hermann-Voit-2021}. Note that beta Jacobi ensembles considered in \cite{Hermann-Voit-2021} are ensembles of points in the interval $[-1, 1]$. Their results will be translated to fit our setting as follows.  We rewrite the joint density in the following form
\begin{align}
	&\frac 1Z \times \prod_{i < j} |\lambda_j - \lambda_i|^{2\kappa} \prod_{l = 1}^N \lambda_l^{A\kappa} (1 - \lambda_l)^{B\kappa}	\notag\\
	&=	\frac 1Z \times \bigg( \prod_{i < j} (\lambda_j - \lambda_i) \prod_{l = 1}^N \lambda_l^{\frac A2} (1 - \lambda_l)^{\frac B2}\bigg)^{2\kappa}, \quad ( \lambda_1, \dots,  \lambda_N) \in \bar \cWA 	\label{bJE-low},
\end{align}
where $\bar \cWA$ is the closure of 
\[
	\cWA = \{(x_1, \dots, x_N) \in \R^N : 0 < x_1 < x_2 < \cdots < x_N < 1\}.
\]
Let the function $\phi \colon \cWA \to \R$ be defined as
\[
	\phi(\lambda_1, \dots, \lambda_N) =  \prod_{i < j} (\lambda_j - \lambda_i) \prod_{l = 1}^N \lambda_l^{\frac A2} (1 - \lambda_l)^{\frac B2}, \quad  (\lambda_1, \lambda_2,  \dots, \lambda_N) \in \bar \cWA.
\]
For $A, B > 0$, one can show that $\phi$ has a unique maximum in $\bar \cWA$ at an inner point $(y_1, \dots, y_N) \in \cWA$. Let $X_\kappa = (\lambda_1, \dots, \lambda_N)$ be a  random vector distributed according to the beta Jacobi ensemble~\eqref{bJE-low}. Then by a well-known Laplace method (see \cite[Lemma~2.1]{Hermann-Voit-2021}), as $\kappa \to \infty$,  
\[
	X_\kappa \Pto (y_1, \dots, y_N).
\]
Consequently, the empirical distribution $L_{N, \beta}$ converges weakly to $\frac1N \sum_{i=1}^N \delta_{y_i}$, in probability. As a byproduct, we deduce that $(z_1, \dots, z_N) = (y_1, \dots, y_N)$.

In addition, from a classical theory of Jacobi polynomials, the points $\{y_1, \dots, y_N\}$ are the zeros of the $N$th Jacobi polynomials $P_N^{(A-1, B-1)}$ with parameters $A-1$ and $B-1$. Here Jacobi polynomials $\{P_n^{(A-1, B-1)}\}_{n \ge 0}$ are monic polynomials orthogonal with respect to the beta distribution with parameters $A,B>0$, or the probability measure with density proportional to $x^{A-1}(1-x)^{B-1}, x\in(0,1)$. It is also known that the Jacobi matrix of the beta distribution with parameters $A, B > 0$ is given by
\begin{align*}
	H &= 	\begin{pmatrix}
		\sqrt{\lambda_0}	\\
		\sqrt{\mu_1}	&\sqrt{\lambda_1}	\\
		&\ddots	&\ddots	
	\end{pmatrix}
	\begin{pmatrix}
		\sqrt{\lambda_0}	&\sqrt{\mu_1}	\\
			&\sqrt{\lambda_1}	&\sqrt{\mu_2}	\\
			&&\ddots	&\ddots	
	\end{pmatrix}
	=\begin{pmatrix}
		a_1	&b_1\\
		b_1	&a_2		&b_2\\
		&\ddots	&\ddots	&\ddots
	\end{pmatrix},\\
	&\qquad
	\begin{cases}
	\lambda_n = \frac{n + A}{2n + A+B}\frac{n + A+B-1}{2n+ A+B-1}, &n \ge 0,\\
	\mu_n =  \frac{n}{2n + A+B-1}\frac{n  + B-1}{2n + A+B-2}, & n \ge 1,\\
	a_1 = \lambda_0,	&\\
	a_{n} = \lambda_{n} + \mu_n, &n \ge 2,\\
	b_n^2 = \lambda_{n-1} \mu_n, &n \ge 1.
	\end{cases}
\end{align*}
Using such Jacobi matrix, Jacobi polynomials $\{P^{(A-1, B-1)}_n\}_{n \ge 0}$ can be defined by
\begin{align*}
	&P^{(A-1, B-1)}_0 = 1,\quad P^{(A-1, B-1)}_1 = x - a_1,\\
	&P^{(A-1, B-1)}_{n+1} = x P^{(A-1, B-1)}_n - a_{n+1}P^{(A-1, B-1)}_n - b_n^2 P^{(A-1, B-1)}_{n-1}, \quad n \ge 1.
\end{align*}

Let $H_{[N]}$ be the top left submatrix of size $N \times N$ of the semi-infinite matrix $H$,
\[
H_{[N]} = \begin{pmatrix}
	a_1	&b_1\\
	b_1	&a_2		&b_2\\
	&\ddots	&\ddots	&\ddots\\
	&&b_{N-1}	&a_N
\end{pmatrix}.
\] 
Then it follows from the three term recurrence relation that 
\[
	P_{N}^{(A-1, B-1)}(x) = \det(x - H_{[N]}).
\]
Thus, the zeros $(y_1, \dots, y_N)$ of $P_{N}^{(A-1, B-1)}$ are the eigenvalues of $H_{[N]}$.

Now let $T^*_N$ be the  matrix obtained by reversing the parameters in $H_{[N]}$, that is, 
\[
	T^*_N  = \begin{pmatrix}
	a_1^*	&b_1^*\\
	b_1^*	&a_2^*		&b_2^*\\
	&\ddots	&\ddots	&\ddots\\
	&&b_{N-1}^*	&a_N^*
\end{pmatrix}, \quad a_i^* = a_{N-i+1}, b_i^* = b_{N-i}. 
\]
We define orthogonal polynomials $\{Q_{n}\}_{n=0}^{N-1}$ with respect to $T^*_N$ as
\begin{align*}
	&Q_0 = 1,\quad Q_1 = x - a_N,\\
	&Q_{n+1} = x Q_n - a_{N-n} Q_n - b_{N-n}^2 Q_{n-1}, \quad n = 1, \dots, N -2.
\end{align*}
The polynomials $\{Q_{n}\}_{n=0}^{N-1}$ are called the duals of Jacobi polynomials $\{P^{(A-1, B-1)}_n\}_{n=0}^{N-1}$. On the one hand, they are orthogonal with respect to the spectral measure of $T_N^*$. On the other hand, they are orthogonal w.r.t.\ the probability measure
\[
	\mu^*_N =\frac{1}{Z^*} \sum_{i=1}^N y_{i}(1 - y_{i}) \delta_{y_{i}}.
\]
where $Z^*$ is a normalizing constant \cite{Vinet-Zhedanov-2004} (see also \cite{Andraus-2020}). Thus, the measure $\mu^*_N$ is the spectral measure of $T_N^*$. Then we derive the following relation between $T_N$ and $T_N^*$.
\begin{lemma}\label{lem:dual-of-JN}
	For any $n = 0,1,2, \dots,$ it holds that 
	\begin{equation}\label{TN}
		(T^*_N)^n(1,1) = \frac{(T_N)^{n+1}(1,1) - (T_N)^{n+2}(1,1)}{T_N(1,1) - (T_N)^2(1,1)}.
	\end{equation}
\end{lemma}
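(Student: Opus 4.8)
The plan is to evaluate both sides of~\eqref{TN} directly, using the fundamental fact recalled in this section that the diagonal moments $J^n(1,1)$ of a Jacobi matrix $J$ are exactly the moments $\bra{\mu, x^n}$ of its spectral measure $\mu$. The structural observation that makes everything work is that $T_N$ and $T_N^*$ share the \emph{same} eigenvalue set, namely the zeros $\{y_1, \dots, y_N\}$ of $P_N^{(A-1,B-1)}$, but carry \emph{different} spectral measures: the spectral measure of $T_N$ is the uniform empirical measure $\mu_N = N^{-1}\sum_{i=1}^N \delta_{y_i}$, whereas the spectral measure of $T_N^*$ is the weighted measure $\mu_N^* = (Z^*)^{-1}\sum_{i=1}^N y_i(1-y_i)\delta_{y_i}$. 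Both identifications have already been established earlier in this subsection, so I may take them as given.

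First I would record the two moment formulas these identifications yield. From $\bra{\mu_N, x^k} = (T_N)^k(1,1)$ one gets
\[
	(T_N)^k(1,1) = \frac1N \sum_{i=1}^N y_i^k,
\]
and from $\bra{\mu_N^*, x^n} = (T_N^*)^n(1,1)$ one gets
\[
	(T_N^*)^n(1,1) = \frac1{Z^*} \sum_{i=1}^N y_i(1-y_i)\, y_i^n .
\]
Since $\mu_N^*$ is a probability measure, its normalizing constant is forced to be $Z^* = \sum_{i=1}^N y_i(1-y_i)$.

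Next I would simplify the right-hand side of~\eqref{TN}. The numerator factors through the density weight $x(1-x)$ as
\[
	(T_N)^{n+1}(1,1) - (T_N)^{n+2}(1,1) = \frac1N \sum_{i=1}^N \big(y_i^{n+1} - y_i^{n+2}\big) = \frac1N \sum_{i=1}^N y_i(1-y_i)\, y_i^n,
\]
and the denominator is the same expression specialized to $n=0$,
\[
	T_N(1,1) - (T_N)^2(1,1) = \frac1N \sum_{i=1}^N \big(y_i - y_i^2\big) = \frac1N \sum_{i=1}^N y_i(1-y_i) = \frac{Z^*}{N}.
\]
Dividing, the factors of $N$ cancel and the ratio collapses to $(Z^*)^{-1}\sum_{i=1}^N y_i(1-y_i)y_i^n$, which is precisely $(T_N^*)^n(1,1)$ from the formula above. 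This yields~\eqref{TN}.

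The argument is purely algebraic, so I do not anticipate an analytic obstacle; the entire content sits in the preceding identification of $\mu_N$ and $\mu_N^*$ as measures on the common eigenvalue set $\{y_i\}$ related by the factor $x(1-x)$. The only point I would state carefully is that the denominator is nonzero: $T_N(1,1) - (T_N)^2(1,1) = Z^*/N > 0$, which holds because each $y_i$ lies strictly inside $(0,1)$ and hence $y_i(1-y_i) > 0$.
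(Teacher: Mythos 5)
Your proposal is correct and follows essentially the same route as the paper: both rest on the previously established identifications of $\mu_N = N^{-1}\sum_i \delta_{y_i}$ and $\mu_N^* = (Z^*)^{-1}\sum_i y_i(1-y_i)\delta_{y_i}$ as the spectral measures of $T_N$ and $T_N^*$ on the common eigenvalue set, and then verify~\eqref{TN} by the same direct moment computation with the weight $x(1-x)$. Your explicit remark that the denominator equals $Z^*/N > 0$ is a small point of extra care the paper leaves implicit, but otherwise the arguments coincide.
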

\begin{proof}
Recall that the measure 
\[
	\mu_N = \frac1N \sum_{i=1}^N \delta_{z_i}
\]
is the spectral measure of $T_N$, and the measure 
\[
	\mu_N^* =\frac{1}{Z^*} \sum_{i=1}^N z_{i}(1 - z_{i}) \delta_{z_{i}}
\] 
is the spectral measure of $T_N^*$. (Because $(y_1, \dots, y_N) = (z_1, \dots, z_N)$.) The normalizing constant $Z^*$ is given by
\[
	Z^* = \sum_{i=1}^N z_i(1 - z_i) = N \bra{\mu_N, x - x^2}.
\]
From the definition of spectral measures, we see that for any $n = 0,1,\dots,$,
\[
	(T_N^*)^n(1,1) = \bra{\mu_N^*, x^n} = \frac1{Z^*} \sum_{i=1}^N z_i(1-z_i) z_i^n = \frac{\bra{\mu_N, x^{n+1} - x^{n+2}}}{\bra{\mu_N, x-x^2}},
\]
and any $k = 0, 1, \dots$,
\[
	\bra{\mu_N, x^k} = \frac1N \sum_{i=1}^N z_i^k = (T_N)^k(1,1).
\]
From those, the relation~\eqref{TN} immediately follows. The proof is complete.
\end{proof}

\subsection{The limiting measure in a high temperature regime}
We first recall arguments in \cite{Trinh-Trinh-Jacobi} which were used to derive the Jacobi matrix of the limiting measure $\nu_c$ in the regime where $\beta N \to 2c \in (0, \infty)$.
Let $m_k(N, \kappa, a, b)$ be the expected value of the $k$th moment of the empirical distribution $L_{N, \beta}$, that is, 
\[
	m_k(N, \kappa, a, b) = \Ex[\bra{L_N, x^k}] = \Ex\left[\frac1N \tr((J_{N, 2\kappa})^k)\right] =\Ex [(J_{N, 2\kappa})^k(1,1)].
\]
Here $\kappa = \beta/2$. 
Then from formulas for moments of the beta distribution, we see that $m_k$ can be defined for any $N, \kappa, a$ and $b$. The function $m_k$ satisfies the following duality relation
\begin{equation}\label{moment-duality}
		m_k(N, \kappa, a, b) =  m_k(-\kappa N, 1/\kappa, - a/\kappa, -b/\kappa),
\end{equation}
(see \cite[Appendix A]{Dumitriu-Paquette-2012} and \cite[Eq.\ (4.15)]{Forrester-2017}).

Next, we form an infinite matrix $J_c$ by changing $N \leftrightarrow -c, A \leftrightarrow -a, B \leftrightarrow -b$ in the matrix $T_N$, that is, 
\[
	J_c =\begin{pmatrix}
		\hat\lambda_0 	& \sqrt{\hat\lambda_0 \mu_1}	\\
		 \sqrt{\hat\lambda_0 \mu_1}	&\lambda_1 + \mu_1		& \sqrt{\lambda_1 \mu_2}	\\
		 &\ddots	&\ddots	&\ddots	
	\end{pmatrix},
\]
where
\begin{align*}
	\hat\lambda_0 &=   \frac{c + a + 1}{2c + a + b + 2},\\
	\lambda_n &=   \frac{n + c + a + 1}{2n + 2c + a + b + 2} \frac{n + c + a + b + 1}{2n + 2c + a + b + 1}, \quad n \ge 0,\\
	\mu_n &=   \frac{n + c}{2n + 2c + a + b + 1} \frac{n + c + b}{2n + 2c + a + b}, \quad n \ge 0.
\end{align*}
Then the duality relation implies that the limit of the empirical distribution $L_{N, \beta}$ in the regime where $\beta N \to 2c$ coincides with the spectral measure of $J_c$. In other words, the limiting measure $\nu_c$ is the spectral measure of $J_c$.
The density of $\nu_c$ was explicitly calculated in \cite{Trinh-Trinh-Jacobi}. It is viewed as Model III of associated Jacobi polynomials.

In the same manner, we now define an infinite Jacobi matrix $J_c^*$ by changing $N \leftrightarrow -c, A \leftrightarrow -a, B \leftrightarrow -b$ in the matrix $T_N^*$,  that is, 
\begin{equation}\label{J*}
	J^*_c=\begin{pmatrix}
		\lambda_0^* + \mu_0^*	&\sqrt{\mu_0^* \lambda_1^*}\\
		\sqrt{\mu_0^* \lambda_1^*}	&\lambda_1^* + \mu_1^*	&\sqrt{\mu_1^* \lambda_2^*}\\
		&\ddots	&\ddots	&\ddots
	\end{pmatrix},
\end{equation}
where
\begin{align}
	\lambda_n^* & = \frac{n+c+a+1}{2n+2c+a+b+2}\frac{n+c+a+b+2}{2n+2c+a+b+3},\\
	\mu_n^* &= \frac{n+c+1}{2n+2c+a+b+3}\frac{n+c+b+2}{2n+2c+a+b+4}, \quad n \ge 0.
\end{align}
\begin{lemma}
The matrix $J^*_c$ is the Jacobi matrix of the probability measure 
\begin{equation}\label{nu*}
	\nu^*_c=(Z_c^*)^{-1}x(1-x)\nu_c(x)dx, \quad \text{with}\quad Z_c^* =  \bra{\nu_c, x(1-x)} = \hat \lambda_0(1 - \hat \lambda_0 - \mu_1).
\end{equation}
\end{lemma}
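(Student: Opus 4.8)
The plan is to push the finite identity of Lemma~\ref{lem:dual-of-JN} through the duality substitution $N \to -c$, $A \to -a$, $B \to -b$, and then to recover $\nu_c^*$ from the moments of $J_c^*$. Since the entries $\lambda_n^*, \mu_n^*$ of $J_c^*$ are bounded (they converge to $1/4$ as $n \to \infty$, so in particular $\sum (b_n^*)^{-1} = \infty$), the matrix $J_c^*$ determines a unique spectral measure $\sigma$, characterised by $\bra{\sigma, x^n} = (J_c^*)^n(1,1)$. As $\nu_c^*$ is compactly supported, hence also determinate, it suffices to prove the moment identity
\[
	(J_c^*)^n(1,1) = \bra{\nu_c^*, x^n} = \frac{\bra{\nu_c, x^{n+1}} - \bra{\nu_c, x^{n+2}}}{\bra{\nu_c, x} - \bra{\nu_c, x^2}}, \qquad n = 0, 1, 2, \dots,
\]
where the second equality uses $\bra{\nu_c^*, x^n} = (Z_c^*)^{-1}\bra{\nu_c, x^{n+1} - x^{n+2}}$ with $Z_c^* = \bra{\nu_c, x - x^2}$. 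Because $\nu_c$ is the spectral measure of $J_c$, the right-hand side is exactly the right-hand side of \eqref{TN} with $T_N$ replaced by $J_c$; the value $Z_c^* = \hat\lambda_0(1 - \hat\lambda_0 - \mu_1)$ drops out of $\bra{\nu_c, x} = J_c(1,1) = \hat\lambda_0$ and $\bra{\nu_c, x^2} = (J_c)^2(1,1) = \hat\lambda_0^2 + \hat\lambda_0 \mu_1$.

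To obtain the moment identity I would argue by analytic continuation. Fix $n$. For every integer $N \ge n + 2$ the quantities $(T_N)^k(1,1)$ with $k \le n+2$ and $(T_N^*)^n(1,1)$ are fixed polynomials in the finitely many top-left entries of the two matrices, and those entries are explicit rational functions of $(N, A, B)$: for $T_N$ via the formulas for $c_m, d_m$, and for $T_N^*$ via the reversed entries $a_{N-k+1}, b_{N-k}$ of $H_{[N]}$, which are rational in $N$ once $N$ is large enough that every index involved avoids the boundary row of $H_{[N]}$. Hence both sides of \eqref{TN} are rational functions of $(N, A, B)$ agreeing on all large integers $N$, so they agree identically and may be evaluated at $N = -c$, $A = -a$, $B = -b$. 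By the construction of $J_c$ the substituted entries of $T_N$ are the entries of $J_c$, whence $(T_N)^k(1,1)$ continues to $(J_c)^k(1,1) = \bra{\nu_c, x^k}$; the remaining point is that the substituted entries of $T_N^*$ are the entries of $J_c^*$, so $(T_N^*)^n(1,1)$ continues to $(J_c^*)^n(1,1)$. Granting this, the continued identity is precisely the displayed moment relation, and the determinacy argument gives $\sigma = \nu_c^*$.

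The hard part is exactly this last matching of entries, where the reversal defining $T_N^*$ interacts delicately with the continuation. Writing $\lambda_n^{H}, \mu_n^{H}$ for the recurrence coefficients of the beta$(A, B)$ Jacobi matrix $H$ continued to $A = -a$, $B = -b$, one must verify the reflection identities
\[
	\lambda_n^* = \lambda_{-(n+c+1)}^{H}, \qquad \mu_n^* = \mu_{-(n+c+1)}^{H}, \qquad n \ge 0,
\]
which are routine once the sign flips are tracked. Combining these with the entry relations read off from the product (Cholesky-type) factorisation of $H$, namely $a_m^H = \lambda_{m-1}^{H} + \mu_{m-1}^{H}$ and $(b_m^H)^2 = \lambda_{m-1}^{H}\mu_m^{H}$, the reversed diagonal $a_{N-k+1} = \lambda_{N-k}^{H} + \mu_{N-k}^{H}$ continues (index $N - k \to -c - k$) to $\lambda_{k-1}^* + \mu_{k-1}^*$, while the reversed off-diagonal $(b_{N-k})^2 = \lambda_{N-k-1}^{H}\mu_{N-k}^{H}$ continues to $\lambda_k^*\mu_{k-1}^*$; these are exactly the diagonal $\lambda_{k-1}^* + \mu_{k-1}^*$ and the squared off-diagonal $\mu_{k-1}^*\lambda_k^*$ of $J_c^*$ in \eqref{J*} (note that the correct diagonal index here is $m-1$ rather than $m$). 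A last minor check is that the continuation meets no pole: the denominators in $c_m, d_m$ and in the reversed coefficients continue to the nonzero quantities appearing in $\lambda_n^*, \mu_n^*$ and to $Z_c^* > 0$ throughout the range $a, b > -1/2$, $c > 0$.
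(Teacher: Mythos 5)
Your proof is correct and takes essentially the same route as the paper: the paper's proof likewise reduces the lemma to the moment identity obtained by carrying the finite identity \eqref{TN} of Lemma~\ref{lem:dual-of-JN} through the substitution $N \to -c$, $A \to -a$, $B \to -b$ (the paper treats this as immediate, since $J_c$ and $J_c^*$ are \emph{defined} by that substitution) and then identifies $\nu_c^*$ as the spectral measure of $J_c^*$ via the matching moments, with uniqueness coming from the boundedness of the entries. The details you make explicit---the rational-function continuation in $(N,A,B)$, the reflection identities matching the reversed entries of $H_{[N]}$ to the entries of $J_c^*$, the determinacy of the compactly supported $\nu_c^*$, and the pole check---are exactly what the paper leaves implicit, and your index correction $a_m = \lambda_{m-1} + \mu_{m-1}$ for $m \ge 2$ is right: the paper's displayed $a_n = \lambda_n + \mu_n$ is a typo, as its own display of $J_c$ with $(2,2)$-entry $\lambda_1 + \mu_1$ confirms.
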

\begin{proof}
The relation~\eqref{TN} in Lemma~\ref{lem:dual-of-JN} implies that for any $n=0,1,\dots,$
\[
		(J^*_c)^n(1,1) = \frac{(J_c)^{n+1}(1,1) - (J_c)^{n+2}(1,1)}{J_c(1,1) - (J_c)^2(1,1)}.
\]
Thus, the measure $\nu^*_c$ defined in \eqref{nu*} satisfies 
\[
	\bra{\nu^*_c, x^n} = \frac{\bra{\nu_c, x(1-x)x^n}}{\bra{\nu_c, x(1-x)}} = \frac{(J_c)^{n+1}(1,1) - (J_c)^{n+2}(1,1)}{J_c(1,1) - (J_c)^2(1,1)} = (J^*_c)^n(1,1), \quad n = 0,1,\dots.
\]
Therefore, $\nu^*_c$ is the spectral measure of $J_c^*$. The lemma is proved.
\end{proof}

\begin{remark}
	By a direct calculation, we see that the Jacobi matrix $J_c^*$ coincides with the Jacobi matrix of Model I of associated Jacobi polynomials (see the matrix $J_I$ in \cite[\S 4]{Trinh-Trinh-Jacobi}) with parameters $(c,a+1,b+1)$. This fact is similar to what happens in the Laguerre case (see \cite{NTT-2023}).
\end{remark}

Recall that the transformation $\cL$ is defined on polynomials as
\begin{align}
	\cL(p)(x) &:= 2c \int \frac{x(1-x)p(x) - y(1-y)p(y)}{x-y} d\nu_c(y) \notag\\
	&\quad + (a+1)p(x) - (a+b+2)xp(x)+x(1-x)p'(x). \label{L-transfromation}
\end{align}
Let $\{p_n\}_{n \ge 0}$ be orthogonal polynomials with respect to $\nu_c^*$ which are defined by using the Jacobi matrix $J^*_c$ as
\begin{equation}\label{orthogonal-polynomials-pn}
\begin{cases}
	p_0(x) = 1, \quad 	p_1(x) = x - a_1^*,\\
	p_{n+1}(x) = x p_n(x)  - a_{n+1}^* p_n(x) - (b_n^*)^2p_{n-1}(x), \quad n = 1,2,\dots,
\end{cases}
\end{equation}
where $a_n^* = \lambda_{n-1}^* + \mu_{n-1}^*$ and $b_n^* = \sqrt{\mu_{n-1}^* \lambda_n^*}$.
Results on the Gaussian case and the Laguerre case suggest the following.
\begin{proposition}\label{lem:L-operator}
It holds that
\begin{equation}\label{L-pn}
	\cL(p_n) = - \gamma_n P_n + const,\quad \gamma_n = (n+1)(n+2c+a+b+2),
\end{equation}
where $P_n$ is a primitive of $p_n$, that is, $P_n' = p_n$.
\end{proposition}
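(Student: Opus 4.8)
The plan is to reduce the identity \eqref{L-pn} to an eigenvalue statement and then close it with a symmetry argument. Since $P_n' = p_n$, differentiating the desired identity gives $(\cL(p_n))' = -\gamma_n p_n$; conversely, any polynomial whose derivative equals $-\gamma_n p_n = -\gamma_n P_n'$ must equal $-\gamma_n P_n$ up to an additive constant, and this constant is precisely the ``$const$'' in \eqref{L-pn} together with the ambiguity in the choice of primitive $P_n$. Hence \eqref{L-pn} is equivalent to showing that $p_n$ is an eigenfunction, with eigenvalue $-\gamma_n$, of the operator
\[
	\cD := \frac{d}{dx}\circ \cL
\]
acting on polynomials.

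Next I would record the structure of $\cD$. Since $\cL$ raises the degree of a polynomial by exactly one (the coefficient of $x^{n+1}$ in $\cL(x^n)$ equals $-(n+2c+a+b+2)\neq 0$ in the parameter range considered) and $\frac{d}{dx}$ lowers it by one, the operator $\cD$ maps polynomials of degree at most $n$ into polynomials of degree at most $n$ for every $n$. A short computation of leading coefficients then gives
\[
	\cD(x^n) = -(n+1)(n+2c+a+b+2)\,x^n + (\text{terms of degree} \le n-1) = -\gamma_n x^n + \cdots,
\]
so $\cD$ is triangular in the degree filtration with distinct diagonal entries $-\gamma_n$ (the $\gamma_n$ are strictly increasing). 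Differentiating the local part of $\cL$ produces the second order operator $x(1-x)\partial^2 + [(a+2)-(a+b+4)x]\partial - (a+b+2)$, which is the classical Jacobi operator attached to the weight $x^{a+1}(1-x)^{b+1}$; the only remaining piece of $\cD$ is the derivative of the mean field term $2c\int \frac{x(1-x)p(x)-y(1-y)p(y)}{x-y}\,d\nu_c(y)$.

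The key step, and the main obstacle, is to show that the full operator $\cD$ is symmetric on polynomials with respect to $\nu_c^*$, i.e.\ $\bra{\nu_c^*, (\cD f)\,g} = \bra{\nu_c^*, f\,(\cD g)}$. Granting this, the argument closes: a degree preserving, $\nu_c^*$ symmetric operator with distinct eigenvalues is automatically diagonal in the basis of $\nu_c^*$ orthogonal polynomials. Concretely, symmetry forces $\bra{\nu_c^*,(\cD p_n)\,p_m} = \bra{\nu_c^*, p_n\,(\cD p_m)} = 0$ for $m<n$, since $\cD p_m$ has degree $<n$; hence the lower order remainder in $\cD p_n + \gamma_n p_n$ vanishes and $\cD p_n = -\gamma_n p_n$. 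Note that the differential part alone is symmetric only with respect to the classical weight $x^{a+1}(1-x)^{b+1}$, which coincides with $\nu_c^*$ precisely when $c=0$; the role of the mean field term is exactly to deform this classical weight into $\nu_c^*$.

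The hard part is therefore handling the mean field contribution. The tool I would use is the equilibrium characterisation of $\nu_c$ already recorded before \eqref{xi-zeta-eta}, namely
\[
	\bra{\nu_c,(a+1)p-(a+b+2)xp+x(1-x)p'} + c\iint \frac{x(1-x)p(x)-y(1-y)p(y)}{x-y}\,d\nu_c(x)d\nu_c(y)=0
\]
for all polynomials $p$, equivalently the moment recursion \eqref{moments-uk}, equivalently the algebraic functional equation for the Cauchy transform $G_c(z)=\int (z-y)^{-1}d\nu_c(y)$ obtained by inserting the resolvent test functions $p=(z-\cdot)^{-1}$. Using this functional equation to rewrite $\int \frac{x(1-x)p(x)-y(1-y)p(y)}{x-y}\,d\nu_c(y)$ in terms of $\nu_c^*$ and the local operator, one checks that the anti symmetric part of $\cD$ in $L^2(\nu_c^*)$ cancels, the boundary contributions from the integration by parts vanishing thanks to the factor $x(1-x)$ and the assumption $a,b>-1/2$. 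This cancellation is the only genuinely computational point; once it is in place, the leading coefficient computation above identifies the eigenvalue as $-\gamma_n=-(n+1)(n+2c+a+b+2)$ and completes the proof.
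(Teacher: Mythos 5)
Your proposal is correct, but it takes a genuinely different route from the paper's. The paper argues algebraically: it introduces the second-kind polynomials $q_n(x)=\int\frac{p_n(x)-p_n(y)}{x-y}\,d\nu_c^*(y)$, which satisfy the same three-term recurrence as $\{p_n\}$ with initial data $q_0=0$, $q_1=1$, localizes the mean-field term via
\[
	2c\int\frac{x(1-x)p_n(x)-y(1-y)p_n(y)}{x-y}\,d\nu_c(y)
	=2c(1-x-\hat\lambda_0)p_n(x)+2c\hat\lambda_0(1-\hat\lambda_0-\mu_1)\,q_n(x),
\]
and then verifies the differentiated form of \eqref{L-pn} by induction on the two recurrences (the long computation is omitted there, much as your final cancellation is asserted rather than executed). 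You instead argue structurally: $\cD=\frac{d}{dx}\circ\cL$ preserves the degree filtration with diagonal entries $-\gamma_n$ --- your leading-coefficient count is right, $-2c$ coming from the mean-field term and $-(n+a+b+2)$ from the local part --- so symmetry of $\cD$ in $L^2(\nu_c^*)$ forces $\cD p_n=-\gamma_n p_n$, which is equivalent to \eqref{L-pn}; note that triangularity plus orthogonality already suffice here, distinctness of the $\gamma_n$ is not needed. Your key symmetry lemma is true and your sketch does close: since the equilibrium identity holds for all polynomials and $\nu_c$ has a (smooth, explicit) density $\rho$ on $(0,1)$ --- a regularity input your route requires and the paper's does not --- it upgrades to the pointwise relation $x(1-x)\rho'=\bigl(a-(a+b)x\bigr)\rho+2c\,x(1-x)\rho\,H\rho$, where $H\rho(x)=\pv\int\frac{\rho(y)}{x-y}\,dy$; with $w=x(1-x)$, the antisymmetric part of the local Jacobi operator in $L^2(\nu_c^*)$ then equals $-2c\int w^2\rho\,(H\rho)(f'g-fg')\,dx$, which is cancelled exactly by the $(wf)'\,H\rho$ contribution from the differentiated mean-field term, the remaining nonlocal pieces being symmetric by skew-adjointness of $H$ together with $H[\psi']=(H\psi)'$, and the boundary terms vanish as you say. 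Two small caveats: inserting resolvent test functions $(z-\cdot)^{-1}$ into a relation established only for polynomials needs a (routine) approximation step for the compactly supported $\nu_c$, and the Hilbert-transform integrations by parts need the endpoint behavior of $\rho$. What your approach buys is a conceptual explanation of why the $\nu_c^*$-orthogonal polynomials diagonalize $\cD$ and why the eigenvalue is $-\gamma_n$, in a form that treats the Gaussian, Laguerre and Jacobi cases uniformly; what the paper's approach buys is a purely algebraic induction within the three-term recurrences, with no analytic assumptions on $\nu_c$ beyond its moments.
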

\begin{proof}
Define 
\[
	q_n(x) = \int\frac{p_n(x) - p_n(y)}{x-y} d\nu^*_c(y).
\]
Then $\{q_n\}$ satisfy the same three term recurrence relation as $\{p_n\}$ but with different initial conditions ($q_0 = 0, q_1 = 1$). We observe that
\begin{align*}
	&2c \int \frac{x(1-x)p_n(x) - y(1-y)p_n(y)}{x-y} d\nu_c(y) \\
	&=2c \int \frac{(x(1-x)-y(1-y))p_n(x) + y(1-y)(p_n(x)-p_n(y))}{x-y} d\nu_c(y) \\
	&=2c(1 - x - \hat \lambda_0) p_n(x) + 2c \hat \lambda_0(1- \hat \lambda_0 - \mu_1)q_n(x).
\end{align*}
Using this, the desired identity~\eqref{L-pn} is equivalent to the following by taking the derivative 
\begin{align*}
	&(2c(1-\hat \lambda_0) + a + 2 )p_n' + 2c \hat \lambda_0(1-\hat \lambda_0-\mu_1)q_n' - (2c+a+b+4)xp_n' + x(1-x)p_n''\\
	&\quad ((n+1)(n+2c+a+b+2) - (2c+a+b+2))p_n = 0.
\end{align*}
We claim that this relation can be shown by induction using recurrence relations for $\{p_n\}$ and $\{q_n\}$. The arguments are direct but rather long, so that we omit the detail.
\end{proof}

\section{Proof of the main result}
Theorem~\ref{thm:CLT-intro} is in fact a direct consequence of Theorem~\ref{thm:CLT-orthogonal}. Note that Theorem~\ref{thm:CLT-orthogonal} states results on orthogonal polynomials $\{p_n\}$ while Theorem~\ref{thm:CLT-intro} states results on orthonormal polynomials $\{\tilde p_n\}$. Theorem~\ref{thm:CLT-orthogonal} implies that 
\[
	\sqrt{N}\Big(\bra{L_{N, \beta}, P_n} - \bra{\nu_c, P_n} \Big) \dto \Normal(0, \sigma_{P_n}^2),
\]
where
\[
	\sigma_{P_n}^2 = \frac{  \bra{\nu_c, x(1-x)p_n^2}}{(n+1)(n+2c+a+b+2) },
\]
because $L_{N, \beta}$ and $\mu_0^{(N)}$ have the same distribution. From  the relation between $\nu_c$ and $\nu_c^*$, we see that 
\[
		 \bra{\nu_c, x(1-x)p_n^2} = \bra{\nu_c, x(1-x)} \bra{\nu_c^*, p_n^2} = \hat \lambda_0(1 - \hat \lambda_0 - \mu_1) \bra{\nu_c^*, p_n^2}.
\]
Note that $\tilde p_n = p_n / (\bra{\nu_c^*, p_n^2})^{1/2}$. Thus, if we take $\tilde P_n = P_n /  (\bra{\nu_c^*, p_n^2})^{1/2}$, we get that 
\[
	\sqrt{N}\Big(\bra{L_{N, \beta}, \tilde P_n} - \bra{\nu_c, \tilde P_n} \Big) \dto \Normal(0, \sigma_{\tilde P_n}^2), \quad  \sigma_{\tilde P_n}^2= \frac{\hat \lambda_0(1 - \hat \lambda_0 - \mu_1)}{(n+1)(n+2c+a+b+2) }.
\]
The first statement in Theorem~\ref{thm:CLT-intro} follows immediately by substituting the formulae of $\hat \lambda_0$ and $\mu_1$. The second statement in Theorem~\ref{thm:CLT-intro} was already included in the proof of Theorem~\ref{thm:CLT-orthogonal}. The proof is complete. \qed

\bigskip
\noindent \textbf{Acknowledgements.} This work is supported by JSPS KAKENHI Grant Numbers 20K03659 (F.N.) and JP19K14547 (K.D.T.). Trinh Hoang Dung, ID VNU.2021.NCS.11, thanks The Development Foundation of Vietnam National University, Hanoi for sponsoring this research.

\bigskip
\begin{footnotesize}

\end{footnotesize}
\end{document}